\theoremstyle{plain}%
\newtheorem{mainthm}{Theorem}%
\newtheorem{theorem}{Theorem}[section]%
\newtheorem{lemma}[theorem]{Lemma}%
\theoremstyle{remark}%
\newtheorem{remark}[theorem]{Remark}%
\newtheorem{example}[theorem]{Example}%
\numberwithin{equation}{section}%
\newcommand{\BB}{\mathbb{B}}%
\newcommand{\CC}{\mathbb{C}}%
\newcommand{\RR}{\mathbb{R}}%
\newcommand{\Sp}{\mathbb{S}}%
\newcommand{\ZZ}{\mathbb{Z}}%
\newcommand{\calO}{\mathcal{O}}%
\newcommand{\bd}{\mathbf{d}}%
\newcommand{\pd}{\partial}%
\newcommand{\eps}{\varepsilon}%
\renewcommand{\geq}{\geqslant}%
\renewcommand{\leq}{\leqslant}%
\renewcommand{\tilde}[1]{\widetilde{#1}}%
\renewcommand{\Re}{\operatorname{Re}}%
\begin{document}

\title{On the Converse of Pr\'{e}kopa's Theorem and Berndtsson's Theorem}

\author{Wang XU}
\address{School of Mathematics, Sun Yat-sen University, Guangzhou 510275, China}
\email{xuwang@amss.ac.cn; xuwang6@mail.sysu.edu.cn}

\author{Hui YANG}
\address{School of Mathematical Sciences, Peking University, Beijing 100871, China}
\email{yanghui@amss.ac.cn}

\thanks{The first author is supported by National Key R\&D Program of China (No. 2024YFA1015200) and Fundamental Research Funds for the Central Universities (SYSU, No. 24QNPY101).}

\begin{abstract}
Given a continuous function $\phi$ defined on a domain $\Omega\subset\RR^m\times\RR^n$, we show that if a Pr\'ekopa-type result holds for $\phi+\psi$ for any non-negative convex function $\psi$ on $\Omega$, then $\phi$ must be a convex function. Additionally, if the projection of $\Omega$ onto $\RR^m$ is convex, then $\overline{\Omega}$ is also convex. This provides a converse of Pr\'ekopa's theorem from convex analysis. We also establish analogous results for Berndtsson's theorem on the plurisubharmonic variation of Bergman kernels, showing that the plurisubharmonicity of weight functions and the pseudoconvexity of domains are necessary conditions in some sense.\\

\noindent \textsc{Keywords.} Convex, Plurisubharmonic, Pseudoconvex, Bergman kernel
\end{abstract}

\subjclass[2020]{32U05, 32T99, 32A36, 26B25, 52A20}

\maketitle

{\small\tableofcontents}

\section{Introduction}

Motivated by recent developments in the converse $L^2$ theory, the present paper is devoted to proving converse results for Pr\'ekopa's theorem on the marginal integration of log-concave functions and Berndtsson's theorem on the plurisubharmonic variation of Bergman kernels.

Let $\phi(t,x)$ be a convex function on $\RR_t^m\times\RR_x^n$, then the \textit{minimum principle} for convex functions asserts that the marginal function
$$ \inf_x \phi(t,x) $$
is a convex function on $\RR_t^m$. In 1972, Pr\'ekopa established a stronger result:

\begin{theorem}[Pr\'{e}kopa \cite{Prekopa73}] \label{Thm:Prekopa}
Let $\phi$ be a convex function defined on a convex domain $\Omega\subset\RR_t^m\times\RR_x^n$. Let $p$ be the projection from $\RR^m\times\RR^n$ to $\RR^m$. For each $t\in p(\Omega)$, let $\Omega_t:=\{x\in\RR^n:(t,x)\in\Omega\}$. Then the function $\tilde{\phi}$ on $p(\Omega)$ defined by
$$ e^{-\tilde{\phi}(t)}:=\int_{\Omega_t} e^{-\phi(t,x)} d\lambda_x $$
is either convex or identically $-\infty$.
\end{theorem}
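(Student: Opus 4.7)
My plan is to reduce Pr\'{e}kopa's theorem to the functional Brunn--Minkowski inequality of Pr\'{e}kopa--Leindler: if $f,g,h\colon\RR^n\to[0,\infty)$ are measurable, $\lambda\in[0,1]$, and $h((1-\lambda)x+\lambda y)\geq f(x)^{1-\lambda}g(y)^{\lambda}$ for all $x,y\in\RR^n$, then
$$
\int_{\RR^n} h \, d\lambda_x \;\geq\; \Bigl(\int_{\RR^n} f \, d\lambda_x\Bigr)^{1-\lambda}\Bigl(\int_{\RR^n} g \, d\lambda_x\Bigr)^{\lambda}.
$$
Given this inequality, the passage to Pr\'{e}kopa's theorem is essentially formal.

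Fix $t_0,t_1\in p(\Omega)$ and $\lambda\in[0,1]$, set $t_\lambda:=(1-\lambda)t_0+\lambda t_1$, and introduce the auxiliary functions $f:=e^{-\phi(t_0,\cdot)}\mathbf{1}_{\Omega_{t_0}}$, $g:=e^{-\phi(t_1,\cdot)}\mathbf{1}_{\Omega_{t_1}}$, and $h:=e^{-\phi(t_\lambda,\cdot)}\mathbf{1}_{\Omega_{t_\lambda}}$ on $\RR^n$. For arbitrary $x_0,x_1\in\RR^n$ the Pr\'{e}kopa--Leindler hypothesis is trivially satisfied when $f(x_0)g(x_1)=0$; otherwise $x_0\in\Omega_{t_0}$ and $x_1\in\Omega_{t_1}$, so convexity of $\Omega$ forces $(1-\lambda)x_0+\lambda x_1\in\Omega_{t_\lambda}$ while convexity of $\phi$ gives $\phi(t_\lambda,(1-\lambda)x_0+\lambda x_1)\leq(1-\lambda)\phi(t_0,x_0)+\lambda\phi(t_1,x_1)$, which exponentiates to $h((1-\lambda)x_0+\lambda x_1)\geq f(x_0)^{1-\lambda}g(x_1)^{\lambda}$. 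Applying Pr\'{e}kopa--Leindler and taking $-\log$ yields
$$
\tilde\phi(t_\lambda) \;\leq\; (1-\lambda)\tilde\phi(t_0)+\lambda\tilde\phi(t_1),
$$
i.e.\ convexity of $\tilde\phi$ in the extended sense on the convex set $p(\Omega)$. The stated dichotomy then follows from a standard fact in convex analysis: if $\tilde\phi(t)=-\infty$ at some interior point, the above inequality propagates this value throughout, forcing $\tilde\phi\equiv-\infty$; otherwise $\tilde\phi$ is a finite-valued convex function.

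The main obstacle is Pr\'{e}kopa--Leindler itself. I would prove it by induction on $n$. The one-dimensional base case reduces, via the layer-cake identity $\int f\,dx=\int_0^\infty|\{f>s\}|\,ds$ together with the inclusion $\{h>s\}\supseteq(1-\lambda)\{f>t\}+\lambda\{g>r\}$ whenever $t^{1-\lambda}r^{\lambda}\geq s$, to the additive one-dimensional Brunn--Minkowski inequality $|(1-\lambda)A+\lambda B|\geq(1-\lambda)|A|+\lambda|B|$, which is immediate after translating $A$ and $B$ so that they share only an endpoint; the weighted AM--GM inequality then upgrades this to the multiplicative form required. The inductive step is a direct Fubini argument: splitting off the last coordinate and applying the $(n-1)$-dimensional statement slicewise produces partial-integral functions on $\RR$ that themselves satisfy a one-dimensional Pr\'{e}kopa--Leindler hypothesis, and one more application of the base case closes the induction.
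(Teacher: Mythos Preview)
The paper does not prove Theorem~\ref{Thm:Prekopa}; it is quoted as a classical result of Pr\'ekopa, cited to \cite{Prekopa73}, and serves only as background for the paper's converse theorems. There is therefore no in-paper proof to compare your argument against.

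Your reduction to the Pr\'ekopa--Leindler inequality is the standard route and is carried out correctly: the verification of the PL hypothesis from convexity of $\Omega$ and of $\phi$ is clean, and the propagation argument for the $-\infty$ alternative is fine since $p(\Omega)$ is open and convex and $\tilde\phi<+\infty$ everywhere (the fibers $\Omega_t$ are nonempty open sets and $\phi$ is finite on them). One wobble in your one-dimensional base case: the superlevel-set inclusion $\{h>s\}\supseteq(1-\lambda)\{f>t\}+\lambda\{g>r\}$ for arbitrary levels $t,r$ with $t^{1-\lambda}r^{\lambda}\geq s$ does not combine with layer-cake to give what you want---integrating $|\{f>s\}|^{1-\lambda}|\{g>s\}|^{\lambda}$ in $s$ points the wrong way by H\"older. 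The clean fix is to normalize so that $\sup f=\sup g=1$, take $t=r=s$, apply the \emph{additive} one-dimensional Brunn--Minkowski inequality to obtain $|\{h>s\}|\geq(1-\lambda)|\{f>s\}|+\lambda|\{g>s\}|$, integrate in $s$, and only then invoke weighted AM--GM. With that adjustment your induction via Fubini goes through as described.
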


Pr\'ekopa's theorem implies the minimum principle by replacing $\phi$ with $k\phi+|x|^2$ and letting $k\to+\infty$. This remarkable result can be viewed as a functional version of the classic Brunn-Minkowski inequality (see \cite{Gardner} for more details).

In the theory of several complex variables, there is considerable interest in plurisubharmonic (\textit{psh} for short) functions and pseudoconvex domains, which are the complex analogs of convex functions and convex domains. Kiselman \cite{Kiselman} and Berndtsson \cite{Berndtsson98} respectively proved that the minimum principle and Pr\'ekopa-type results also hold for psh functions and pseudoconvex domains that satisfy certain symmetry properties.

In 2005, Berndtsson \cite{Berndtsson06} proved that the fiberwise Bergman kernel of a pseudoconvex domain is log-psh on the entire domain. This celebrated theorem generalizes the results of \cite{Berndtsson98}. Later, in the milestone paper \cite{Berndtsson09}, Berndtsson further established the Nakano positivity of certain direct image bundles, with the aforementioned theorem on fiberwise Bergman kernels corresponding to the Griffiths positivity of direct images. These results laid the foundation for the so-called ``\textit{complex Brunn-Minkowski theory}", which has since become a powerful tool in complex analysis, complex geometry, and algebraic geometry. For more details, we refer readers to Berndtsson's survey articles \cite{BerndtssonAbel, BerndtssonECM, BerndtssonICM} and the references therein.

Recall that, given an open set $D\subset\CC^n$ and an upper semi-continuous function $u$ on $D$, the weighted \textit{Bergman space} of $D$ is a Hilbert space defined by
$$ A^2(D;e^{-u}) := \left\{ f\in\calO(D): \|f\|^2 = \int_D |f|^2e^{-u} d\lambda < +\infty \right\}, $$
where $\calO(D)$ denotes the space of holomorphic functions on $D$ and $d\lambda$ denotes the Lebesgue measure. Moreover, the diagonal \textit{Bergman kernel} of $D$ is defined as
$$ B_D(z;e^{-u}) := \sup\left\{ |f(z)|^2: f\in A^2(D;e^{-u}), \|f\|\leq1 \right\}, \quad z\in D. $$
Since $\log|f|^2$ is psh for any $f\in\calO(D)$, one can easily show that $\log B_D(z;e^{-u})$ is a psh function on $D$. Now, Berndtsson's theorem on the psh variation of Bergman kernels can be stated as follows:

\begin{theorem}[Berndtsson \cite{Berndtsson06}] \label{Thm:Berndt}
Let $\Omega$ be a pseudoconvex domain in $\CC_\tau^m\times\CC_z^n$ and $\varphi$ a psh function on $\Omega$. For each $\tau$, let $\Omega_\tau:=\{z\in\CC^n:(\tau,z)\in\Omega\}$ and $\varphi_\tau:=\varphi(\tau,\cdot)$. Then
$$ (\tau,z) \mapsto \log B_{\Omega_\tau}(z;e^{-\varphi_\tau}) $$
is either psh or identically $-\infty$ on $\Omega$.
\end{theorem}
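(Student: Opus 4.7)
My plan follows Berndtsson's original strategy, reducing the psh of $u(\tau,z) := \log B_{\Omega_\tau}(z;e^{-\varphi_\tau})$ to a complex-analytic Pr\'ekopa-type positivity statement. The starting point is the lower bound
$$ B_{\Omega_\tau}(z;e^{-\varphi_\tau}) \geq \frac{|F(\tau,z)|^2}{\|F(\tau,\cdot)\|^2_{\varphi_\tau}}, \qquad F\in\calO(\Omega), $$
obtained by testing the variational definition of the Bergman kernel against the fiber restriction of $F$. This yields the family of candidate psh minorants
$$ u_F(\tau,z) := \log|F(\tau,z)|^2 - \log\|F(\tau,\cdot)\|^2_{\varphi_\tau}. $$
For any $(\tau_0,z_0)$ with $B_{\Omega_{\tau_0}}(z_0;e^{-\varphi_{\tau_0}})>0$, the Ohsawa--Takegoshi $L^2$-extension theorem extends the fiber extremal $f_0$ (normalized by $\|f_0\|_{\varphi_{\tau_0}}=1$ and $|f_0(z_0)|^2=B_{\Omega_{\tau_0}}(z_0;e^{-\varphi_{\tau_0}})$) to some $F\in\calO(\Omega)$ with $F(\tau_0,\cdot)=f_0$, giving $u_F(\tau_0,z_0) = u(\tau_0,z_0)$. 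Hence $u$ is the pointwise supremum of the $u_F$'s, so if each $u_F$ is psh, the upper-envelope principle of pluripotential theory (together with upper semi-continuity of $u$, itself handled by the same OT bound) identifies $u$ with $(\sup_F u_F)^*$ and delivers the conclusion.

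It then suffices to show each $u_F$ is psh. The piece $\log|F|^2$ is psh on $\Omega$ directly, and $-\log\|F(\tau,\cdot)\|^2_{\varphi_\tau}$ depends only on $\tau$, so the task reduces to the following weighted complex Pr\'ekopa-type claim: if $\Omega$ is pseudoconvex, $\varphi$ is psh on $\Omega$, and $F\in\calO(\Omega)$, then
$$ \tau\mapsto -\log\int_{\Omega_\tau}|F|^2 e^{-\varphi_\tau}\,d\lambda_z $$
is psh on $p(\Omega)$. I would prove this by approximation: replace $\Omega$ by a strongly pseudoconvex exhaustion and $\varphi$ by a smooth strictly psh approximation, and in this smooth category test against holomorphic discs $\Delta \to p(\Omega)$ to reduce to the one-parameter inequality $\pd_\tau\bar\pd_\tau\bigl(-\log\int_{\Omega_\tau}|F|^2 e^{-\varphi_\tau}\bigr)\geq 0$. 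This inequality follows from a Hörmander-type twisted $L^2$-identity: orthogonally decompose a fiberwise test section into its Bergman-kernel projection and its complement, solve the $\bar\pd_z$-equation on the fiber against $e^{-\varphi_\tau}$ for the complement, and bound the solution via Hörmander's $L^2$-estimate.

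The main obstacle is this Hörmander/curvature step, where the pseudoconvexity of $\Omega$ itself — not merely of each fiber $\Omega_\tau$ — must be exploited: a defining function for $\Omega$ and its Levi form enter the estimate, and the twisting factor has to be chosen delicately to convert the psh of $\varphi$ together with the Levi positivity of $\pd\Omega$ into the desired positivity of $\pd_\tau\bar\pd_\tau$ of the log-norm. The surrounding ingredients (OT extension, upper-envelope principle, and smooth approximation) are routine in $L^2$-theory and pluripotential theory.
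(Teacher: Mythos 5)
The paper does not supply its own proof of this theorem: it is quoted verbatim from Berndtsson \cite{Berndtsson06} as a known background result, with only a remark that Berndtsson's proof uses H\"ormander's $L^2$-estimate and that Guan--Zhou \cite{GZ15} later re-derived it from the optimal $L^2$-extension theorem. So there is no in-paper argument to compare against; I will address your proposal on its own merits.

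There is a genuine gap: the intermediate statement you reduce to --- that $\tau\mapsto -\log\int_{\Omega_\tau}|F|^2 e^{-\varphi_\tau}\,d\lambda_z$ is psh for every $F\in\calO(\Omega)$ --- is false, and consequently the functions $u_F$ are not psh in general. Take $\Omega=\triangle_\tau\times\triangle_z$, $\varphi\equiv0$, $F(\tau,z)=z+\tau$. Then
$$\|F_\tau\|^2=\int_\triangle|z+\tau|^2\,d\lambda_z=\pi\Bigl(|\tau|^2+\tfrac12\Bigr),\qquad \frac{\pd^2}{\pd\tau\pd\bar\tau}\bigl(-\log\|F_\tau\|^2\bigr)=-\frac{1/2}{(|\tau|^2+1/2)^2}<0,$$
so $-\log\|F_\tau\|^2$ is strictly plurisuperharmonic, and on $\{F\neq0\}$ (where $\log|F|^2$ is pluriharmonic) $u_F$ is likewise strictly plurisuperharmonic. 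In bundle language: positivity of the $L^2$-metric on the direct image does not imply that $-\log\|s_\tau\|^2$ is psh for holomorphic sections $s$ of the bundle itself --- that would be Griffiths negativity, not positivity. The psh statement is correct only for holomorphic sections of the \emph{dual} bundle, of which the Bergman kernel (evaluation functional) is one; this is exactly why the theorem cannot be obtained by exhibiting $\log B$ as a pointwise supremum of individually psh minorants $u_F$.

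Both known proofs avoid this false step. Berndtsson's argument differentiates the Bergman projection in $\tau$ and expresses $i\pd\bar\pd_\tau\log B$ in terms of the $\bar\pd_z$-minimal solution of a fiberwise equation, which is then bounded from below by H\"ormander's inequality using the Levi form of $\Omega$ and $i\pd\bar\pd\varphi$; it is a direct curvature computation, not an envelope argument. The Guan--Zhou route does use Ohsawa--Takegoshi extensions $F$ of the fiber extremal --- so your first two steps are sound --- but the sub-mean-value inequality for $\log B$ is obtained from the \emph{sharp constant} in the optimal extension theorem combined with the concavity of $\log$ applied to $\theta\mapsto\|F_{\tau_0+re^{i\theta}}\|^2$, never from plurisubharmonicity of $u_F$. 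Replacing your step ``each $u_F$ is psh'' by either of these mechanisms is where the real work lies.
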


The case of $n=1$ and $\varphi\equiv0$ is due to Maitani-Yamaguchi \cite{MY04}. A key ingredient in Berndtsson's proof is H\"ormander's $L^2$-estimate for the $\bar\pd$-operator. Interestingly, Guan-Zhou \cite{GZ15} showed that their optimal $L^2$ extension theorem also implies Theorem \ref{Thm:Berndt}.

In $L^2$-existence theorems for $\bar\pd$-equations and (optimal) $L^2$ extension theorems, the weight functions are psh. Conversely, Deng-Ning-Wang \cite{DNW} showed that plurisubharmonicity is also a necessary condition in a certain sense (see \cite[etc]{DWZZ, HI21, DNWZ, DZ} for related work).

Motivated by this, we aim to investigate the converse of Pr\'ekopa's theorem and Berndtsson's theorem. Specifically, we inquire whether the convexity of $\phi$ and $\Omega$ in Theorem \ref{Thm:Prekopa} are necessary, and whether the plurisubharmonicity of $\varphi$ and the pseudoconvexity of $\Omega$ in Theorem \ref{Thm:Berndt} are necessary.\\

Imitating the definition of the ``optimal $L^2$ estimate property" in \cite{DNW}, we introduce an additional convex weight $\psi$ and require a Pr\'ekopa-type result for $\phi+\psi$. This leads to the following converse of Pr\'ekopa's theorem.

\begin{mainthm} \label{Thm:ConvPrekopa}
Let $\Omega$ be a domain in $\RR_t^m\times\RR_x^n$ and $\phi$ a continuous function on $\Omega$ such that $\int_{\Omega_t} e^{-\phi(t,\cdot)} d\lambda < +\infty$ for all $t\in p(\Omega)$, where $p$ is the projection from $\RR^m\times\RR^n$ to $\RR^m$ and $\Omega_t=\{x\in\RR^n:(t,x)\in\Omega\}$ are the fibers. Assume that for any convex function $\psi$ on $\Omega$ that is bounded below, the function $\tilde{\psi}$ defined by
\begin{equation} \label{Eq:ConvPrekopa}
    e^{-\tilde{\psi}(t)} := \int_{\Omega_t} e^{-\phi(t,\cdot)-\psi(t,\cdot)} d\lambda
\end{equation}
is also convex on $p(\Omega)$. Then $\phi$ must be a convex function. Moreover, if $p(\Omega)$ is convex, then $\overline{\Omega}$ is also convex.
\end{mainthm}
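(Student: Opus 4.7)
The plan is to test the hypothesis against a one-parameter family of Gaussian-type convex weights and let Laplace's method recover pointwise information about $\phi$. For any affine map $\ell\colon\RR^m\to\RR^n$ and $k>0$, set $\psi_k(t,x):=k\,|x-\ell(t)|^2$; since $(t,x)\mapsto x-\ell(t)$ is itself affine, each $\psi_k$ is non-negative and convex on $\RR^m\times\RR^n$, hence admissible in the hypothesis. The key asymptotic I will establish is that, whenever $\ell(t_*)$ lies in the interior of $\Omega_{t_*}$,
\[
    \tilde\psi_k(t_*)-\tfrac{n}{2}\log\tfrac{k}{\pi}\ \longrightarrow\ \phi\bigl(t_*,\ell(t_*)\bigr)\quad\text{as } k\to\infty.
\]
This is a standard Laplace estimate: after the change of variable $y=\sqrt{k}\,(x-\ell(t_*))$, the integrand splits into the ball $\{|y|<\delta\sqrt k\}$, where continuity of $\phi$ at $(t_*,\ell(t_*))$ reduces the integral to the Gaussian $\pi^{n/2}$ up to a factor $e^{\pm\varepsilon}$, and its complement, which is controlled by $e^{-k\delta^2}\,k^{n/2}\,\int_{\Omega_{t_*}}e^{-\phi}\,d\lambda$ using the hypothesized finiteness of the fiber integrals.

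For the convexity of $\phi$, let $(t_0,x_0),(t_1,x_1)\in\Omega$ with joining segment in $\Omega$. When $t_0\ne t_1$, I choose an affine $\ell$ with $\ell(t_j)=x_j$; by openness of $\Omega$ and compactness of the segment, $(t,\ell(t))\in\Omega$ for $t$ in a neighborhood of $[t_0,t_1]$. The segment $[t_0,t_1]$ lies in $p(\Omega)$, so each $\tilde\psi_k$ is convex on it by hypothesis. The Laplace asymptotic then gives pointwise convergence of $\tilde\psi_k-\tfrac{n}{2}\log(k/\pi)$ to $t\mapsto\phi(t,\ell(t))$ on $[t_0,t_1]$, and convexity passes to pointwise limits, so $\phi\bigl((1-s)t_0+st_1,(1-s)x_0+sx_1\bigr)$ is convex in $s$. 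When $t_0=t_1$, I pick $v\in\RR^m\setminus\{0\}$ and replace $t_1$ by $t_1+\varepsilon v$; for small $\varepsilon>0$ the perturbed segment still lies in $\Omega$, the previous case applies, and letting $\varepsilon\to 0$ together with the continuity of $\phi$ recovers convexity along the original vertical segment.

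Assume next that $p(\Omega)$ is convex, and suppose for contradiction that $\overline{\Omega}$ is not. After a small perturbation that keeps the endpoints in the open set $\Omega$, I can find $(t_0,x_0),(t_1,x_1)\in\Omega$ and $\theta\in(0,1)$ such that the midpoint $(t_\theta,x_\theta):=(1-\theta)(t_0,x_0)+\theta(t_1,x_1)$ sits at distance at least $2\delta>0$ from $\overline{\Omega}$. Convexity of $p(\Omega)$ puts $t_\theta\in p(\Omega)$, while $\Omega_{t_\theta}$ avoids the ball of radius $\delta$ around $x_\theta$. With the same affine $\ell$ as before, $\ell(t_\theta)=x_\theta$, and the crude estimate
\[
    e^{-\tilde\psi_k(t_\theta)}=\int_{\Omega_{t_\theta}}e^{-\phi-k|x-x_\theta|^2}\,d\lambda\ \leq\ e^{-k\delta^2}\int_{\Omega_{t_\theta}}e^{-\phi}\,d\lambda
\]
forces $\tilde\psi_k(t_\theta)\geq k\delta^2-C$, while the Laplace asymptotic at the endpoints yields $\tilde\psi_k(t_j)=\tfrac{n}{2}\log(k/\pi)+\phi(t_j,x_j)+o(1)$. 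The $k$-linear growth at the midpoint against merely logarithmic growth at the endpoints contradicts $\tilde\psi_k(t_\theta)\leq(1-\theta)\tilde\psi_k(t_0)+\theta\tilde\psi_k(t_1)$ for all sufficiently large $k$.

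The main technical obstacle I expect is making the Laplace asymptotic rigorous under only the continuity of $\phi$ and the $L^1$ fiber hypothesis, with no smoothness or uniform control assumed; the vertical case $t_0=t_1$ for the convexity of $\phi$ also needs the perturbation trick above. Once both items are in place, the two conclusions of Theorem~\ref{Thm:ConvPrekopa} follow from the single family $\psi_k$ by comparing the rate of growth of $\tilde\psi_k$ in $k$: logarithmic when $\ell(t)$ sits inside the fiber, and linear when it lies outside $\overline{\Omega}$.
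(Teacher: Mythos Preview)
Your proposal is correct and follows essentially the same strategy as the paper: choose convex weights $\psi_k$ that concentrate along the graph of an affine map, prove that $\tilde\psi_k$ (up to a $t$-independent additive constant) converges pointwise to $\phi(t,\ell(t))$, pass convexity to the limit, and for the domain part contrast the growth of $\tilde\psi_k$ at the ``missing'' point with its behavior at the endpoints. The only difference is the specific weight: you use the Gaussian $k|x-\ell(t)|^2$ and Laplace's method, whereas the paper uses the piecewise-linear $\psi_k(t,x)=k^2\max\{|x-a(t)|-k^{-1},0\}+\log(\sigma_n k^{-n})$ with the normalization baked in so that $\tilde\psi_k(t)\to\phi(t,a(t))$ directly (their Lemma~2.1); the localization estimates and the perturbation for the vertical case $t_0=t_1$ are handled identically.
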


Here, we call a function $f:D\subset\RR^N\to\RR$ defined on a (not necessary convex) domain \textit{convex}, if the restriction of $f$ to any line segment $L\subset\Omega$ is convex. Since $\psi$ is bounded below, the integral in \eqref{Eq:ConvPrekopa} is convergent. As we will see in Section \ref{Sec:Prekopa-Ex}, Theorem \ref{Thm:ConvPrekopa} is false without the twisted factor $e^{-\psi}$, and we can only expect the convexity of $\overline{\Omega}$, rather than that of $\Omega$.

Let's explain the idea of the proof and the role of $e^{-\psi}$ in Theorem \ref{Thm:ConvPrekopa}. For simplicity, we consider the case of $m=1$. Given a line segment $L\subset\Omega$ defined by $x=At+b$, $t\in[t_0,t_1]$, we construct a sequence of lower-bounded convex functions $\psi_k$ on $\Omega$ such that $\tilde{\psi}_k$ defined by \eqref{Eq:ConvPrekopa} converges pointwise to $\phi(t,At+b)$. Then, the convexity of $\tilde{\psi}_k$ implies the convexity of $\phi|_L$. For a vertical line segment $L\subset\Omega$, the convexity of $\phi|_L$ follows from a simple limit argument.\\

Similarly, we consider an additional psh weight $\psi$ and require a Berndtsson-type result for $\varphi+\psi$. This leads to the following converse of Berndtsson's theorem.

\begin{mainthm} \label{Thm:ConvBerndt}
Let $\Omega$ be a domain in $\CC_\tau^{m}\times\CC_z^n$ and $\varphi$ a strongly upper semi-continuous function on $\Omega$ such that $B_{\Omega_\tau}(z;e^{-\varphi_\tau})$ is not identically zero on $\Omega$. Assume that for any psh function $\psi$ on $\Omega$ that is bounded below, the function
$$ (\tau,z) \mapsto \log B_{\Omega_\tau}(z;e^{-\varphi_\tau-\psi_\tau}) $$
is also psh on $\Omega$. Then $\varphi$ must be a psh function. Here, $\Omega_\tau:=\{z\in\CC^n:(\tau,z)\in\Omega\}$, $\varphi_\tau:=\varphi(\tau,\cdot)$ and $\psi_\tau:=\psi(\tau,\cdot)$.
\end{mainthm}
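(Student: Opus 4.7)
The strategy mirrors that of Theorem~A: construct a one-parameter family of psh test weights $\psi_k$ on $\Omega$, bounded below, for which the weighted Bergman kernel $B_{\Omega_\tau}(z;e^{-\varphi_\tau-\psi_{k,\tau}})$, after an explicit normalisation, recovers $\varphi$ pointwise along a prescribed complex line as $k\to\infty$. Pushing the hypothesised psh-ness of $\log B$ through this limit then yields subharmonicity of $\varphi$ along the line. Since $\varphi$ is upper semi-continuous, it suffices to verify the sub-mean value property of $\varphi$ along every complex disc in $\Omega$.

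\textbf{Non-vertical lines.} Fix $(\tau_0,z_0)\in\Omega$ and a direction $(a,b)\in\CC^m\times\CC^n$ with $a\neq0$. Choose an affine holomorphic map $h(\tau)=z_0+A(\tau-\tau_0)$ with $A:\CC^m\to\CC^n$ linear and $Aa=b$, so that the complex line $L=\{(\tau_0+\zeta a,z_0+\zeta b)\}$ sits on the graph of $h$. Set $\psi_k(\tau,z):=k|z-h(\tau)|^2$, which is psh on $\Omega$ (as the Hermitian square of a holomorphic function) and non-negative, hence admissible. By hypothesis, $u_k(\tau,z):=\log B_{\Omega_\tau}(z;e^{-\varphi_\tau-k|z-h(\tau)|^2})$ is psh on $\Omega$, and restriction to $L$ (a complex submanifold parametrised holomorphically by $\zeta$) gives the subharmonic function $v_k(\zeta):=u_k(\tau_0+\zeta a,z_0+\zeta b)-n\log(k/\pi)$. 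A Laplace-type analysis then yields $v_k(\zeta)\to\varphi(\tau_0+\zeta a,z_0+\zeta b)$ as $k\to\infty$: the upper bound uses the sub-mean inequality for $|f|^2$ on a ball of radius $r\sim\sqrt{n/k}$ around the evaluation point together with upper semi-continuity of $\varphi$; the lower bound uses the Bergman minimiser $f_\tau\in\calO(\Omega_\tau)$ for $(\Omega_\tau,e^{-\varphi_\tau})$ at $h(\tau)$ (so that $f_\tau(h(\tau))=1$ and $|f_\tau|^2 e^{-\varphi_\tau}\in L^1(\Omega_\tau)$) as a trial function, combined with the Lebesgue density theorem applied to this $L^1$ function against the Gaussian approximate identity $(k/\pi)^n e^{-k|z-h(\tau)|^2}$. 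Because $v_k$ is subharmonic and locally bounded above, a Hartogs-type lemma implies that $(\limsup_{k\to\infty} v_k)^*$ is subharmonic, and strong upper semi-continuity of $\varphi$ identifies this regularisation with $\varphi\circ L$, so $\varphi$ is subharmonic along~$L$.

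\textbf{Vertical lines.} For a vertical line $L_0=\{(\tau_0,z_0+\zeta b):\zeta\in\CC\}$ and a chosen centre $(\tau_0,z_0+\zeta_0 b)$, approximate by the tilted lines $L_\eps=\{(\tau_0+\eps\zeta e_1,z_0+(\zeta_0+\zeta)b):\zeta\in\CC\}$ with $\eps>0$ and a fixed unit vector $e_1\in\CC^m$. The non-vertical step applied to $L_\eps$ gives the sub-mean inequality
\[ \varphi(\tau_0,z_0+\zeta_0 b)\leq\frac{1}{2\pi}\int_0^{2\pi}\varphi(\tau_0+\eps r e^{i\theta}e_1,\,z_0+(\zeta_0+r e^{i\theta})b)\,d\theta \]
for each small $r$. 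Letting $\eps\to0$ and applying reverse Fatou (justified because $\varphi$ is upper semi-continuous and locally bounded above) yields the sub-mean inequality for $\varphi$ along $L_0$ at $\zeta_0$. Ranging over all base points $z_0+\zeta_0 b$ and directions $b$, $\varphi(\tau_0,\cdot)$ satisfies the sub-mean value property in every direction at every point of $\Omega_{\tau_0}$, so it is psh on the fibre. Combined with the non-vertical case, $\varphi$ is subharmonic along every complex line in $\Omega$, hence psh on $\Omega$.

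\textbf{Main obstacle.} The hard part is the lower bound in the Bergman-kernel asymptotic for merely (strongly) upper semi-continuous $\varphi$: the naive trial $f\equiv1$ breaks down when $e^{-\varphi}$ fails to be locally integrable near a downward singularity of $\varphi$. Using the Bergman minimiser $f_\tau$ and Lebesgue-point arguments for $|f_\tau|^2 e^{-\varphi_\tau}\in L^1$ should yield the required matching (up to an absolute constant coming from the Stirling factor in the holomorphic sub-mean inequality, which can be absorbed into the normalisation), and here the strong upper semi-continuity of $\varphi$ is essential to upgrade an a.e.\ statement on the line into the pointwise identification of $(\limsup_k v_k)^*$ with $\varphi\circ L$.
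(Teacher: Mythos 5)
Your overall strategy matches the paper's: construct a sequence of lower-bounded psh weights $\psi_k$ that localize the Bergman kernel along an affine graph $z=h(\tau)$, pass the hypothesized plurisubharmonicity of $\log B$ through the limit, and handle vertical directions by tilting. Your Gaussian weight $k|z-h(\tau)|^2$ is a reasonable alternative to the paper's $\psi_k(\tau,z) = k\max\{\log(k|z-a(\tau)|),0\} + \log(\sigma_{2n}k^{-2n})$, and your computation of the upper bound via the sub-mean inequality for $|f|^2$ against a radial weight is sound. The problem is in how you dispose of the exceptional set for the lower bound.

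You claim an a.e.\ convergence $v_k(\zeta)\to\varphi(L(\zeta))$ on the line $L$, and then feed this into a Hartogs-type regularisation to identify $(\limsup_k v_k)^*$ with $\varphi\circ L$. This a.e.\ statement on $L$ is not available. The Lebesgue-point condition for $e^{-\varphi(\tau,\cdot)}$ (or for $|f_\tau|^2 e^{-\varphi_\tau}$) holds a.e.\ in $z$ for each fixed $\tau$, and hence, by Fubini, a.e.\ in $\Omega\subset\CC^{m+n}$; but the graph $\{(\tau,h(\tau))\}$ and the line $L$ are null sets of $\Omega$, so they can avoid every Lebesgue point. For the same reason, $B_{\Omega_\tau}(h(\tau);e^{-\varphi_\tau})$ may vanish identically along $L$ — the hypothesis only rules out $B\equiv 0$ on $\Omega$, and the zero set $\{\log B=-\infty\}$ is a pluripolar set that can contain the entire line — in which case the Bergman minimiser $f_\tau$ you want to use as a trial function does not exist. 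Finally, the ``strong'' upper semi-continuity of $\varphi$ is defined relative to null sets of $\Omega$, not of $L$, so even if you had an a.e.\ statement on $L$ it would not directly upgrade to the pointwise identification you need. (Also, there is no ``Stirling factor'' in the holomorphic sub-mean inequality; no such constant should appear.)

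The paper avoids this difficulty by a different architecture. It first constructs (Lemma~\ref{Lemma:ZeroMes1}, via Fubini plus measurability checks) a single null set $S\subset\Omega$ such that for every $(\tau_0,z_0)\notin S$ both $B_{\Omega_\tau}(z_0;e^{-\varphi_{\tau_0}})>0$ and the Lebesgue-point condition at $z_0$ hold. Then, for an affine disc centred at such a $(\tau_0,z_0)$, the sub-mean inequality is obtained by combining the lower bound $\lim_k u_k(\tau_0)=\varphi(\tau_0,z_0)$ (available because the centre is outside $S$) with the universal upper bound $\varlimsup_k u_k(\tau)\le\varphi(\tau,a(\tau))$ on the boundary circle (which requires no Lebesgue-point or positivity information), and Fatou's lemma. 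The lower bound is thus only ever invoked at a single point, never on a circle or a.e.\ on a line. For a centre $(\tau_*,z_*)\in S$, strong upper semi-continuity of $\varphi$ — applied in $\Omega$, where $S$ is genuinely null — supplies a sequence of centres $(\tau_j,z_j)\in\Omega\setminus S$ with $\varphi(\tau_j,z_j)\to\varphi(\tau_*,z_*)$, and one more Fatou plus ordinary upper semi-continuity argument closes the proof. You should reorganise your argument along these lines: prove the sub-mean inequality pointwise at centres outside a null set of $\Omega$, and let strong upper semi-continuity handle the remaining centres, rather than attempting an a.e.\ statement along the line.
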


We call a function $u:D\subset\RR^N\to[-\infty,+\infty)$ \textit{strongly upper semi-continuous}, if
$$ \varlimsup_{x\in D\setminus S, x\to x_0} u(x) = u(x_0) $$
for any $x_0\in D$ and any subset $S\subset D$ of zero measure. It is known that psh functions are strongly upper semi-continuous.
Since $\psi$ is bounded below, we also have $B_{\Omega_\tau}(z;e^{-\varphi_\tau-\psi_\tau})\not\equiv0$ on $\Omega$. Similar to Theorem \ref{Thm:ConvPrekopa}, we have counterexamples showing that the additional psh weight $\psi$ in Theorem \ref{Thm:ConvBerndt} is necessary (see Section \ref{Sec:Berndt-Ex}). Moreover, the \textit{strong} upper semi-continuity of $\varphi$ is also necessary.

The twisted factor $e^{-\psi}$ plays the same role as in Theorem \ref{Thm:ConvPrekopa}. When $\varphi$ is continuous and $B_{\Omega_\tau}(z;e^{-\varphi_\tau})$ is nowhere zero on $\Omega$, the proof of Theorem \ref{Thm:ConvBerndt} is almost the same as that of Theorem \ref{Thm:ConvPrekopa}: given an affine map $z=A\tau+b$, where $A\in\mathrm{Mat}_\CC(n,m)$ and $b\in\CC^n$, we construct a sequence of lower-bounded psh functions $\psi_k$ on $\Omega$ such that $u_k(\tau):=\log B_{\Omega_\tau}(A\tau+b;e^{-\varphi_\tau-\psi_{k,\tau}})$ converges to $\varphi(\tau,A\tau+b)$; then the plurisubharmonicity of $u_k(\tau)$ implies the plurisubharmonicity of $\varphi(\tau,A\tau+b)$. However, in the general case, we encounter exceptional sets where this argument fails. This issue makes the proof more technical and tedious.

Recently, Li and Zhou \cite{LiZhou} proved a special case of Theorem \ref{Thm:ConvBerndt} using the characterization of psh functions via the multiple $L^2$-extension property. However, our approach is entirely different.\\

Now, we turn to the necessity of pseudoconvexity in Berndtsson's theorem. Note that if $\Omega$ is a domain in $\CC^m\times\CC^n$ such that $p^{-1}(L)\cap\Omega$ is pseudoconvex for every complex line $L\subset\CC^m$, where $p$ is the projection from $\CC^m\times\CC^n$ to $\CC^m$, then Theorem \ref{Thm:Berndt} still holds. Therefore, what really matters is the case where $m=1$. By extending the method used to prove Theorem \ref{Thm:ConvBerndt}, we derive the following converse of Berndtsson's theorem, which addresses the necessity of pseudoconvexity.

\begin{mainthm} \label{Thm:CharPsc}
Let $\Omega$ be a domain in $\CC_\tau\times\CC_z^n$ such that $\overline{\Omega_\tau}^\circ=\Omega_\tau$ for all $\tau\in p(\Omega)$, where $p$ is the projection from $\CC\times\CC^n$ to $\CC$ and $\Omega_\tau=\{z\in\CC^n:(\tau,z)\in\Omega\}$ are the fibers.
Assume that for any non-negative psh function $\psi(\tau,z)=\psi_\tau(z)$ on $\Omega$, the function
$$ (\tau,z) \mapsto \log B_{\Omega_\tau}(z;e^{-\psi_\tau}) $$
is either psh or identically $-\infty$ on $\Omega$. Then $\Omega$ must be a pseudoconvex domain.
\end{mainthm}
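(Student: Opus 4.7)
The plan is to argue by contradiction: assuming $\Omega$ is not pseudoconvex, I will produce a non-negative psh function $\psi$ on $\Omega$ such that $u(\tau,z):=\log B_{\Omega_\tau}(z;e^{-\psi_\tau})$ fails to be psh on $\Omega$, contradicting the hypothesis.

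First I would apply the continuity principle (Kontinuit\"atssatz) to extract a geometric witness of non-pseudoconvexity. After a suitable normalization this yields a holomorphic map $F:\overline{\Delta}\to\CC^n$ on a closed disc $\overline{\Delta}\subset\CC_\tau$ and an interior point $\tau_0\in\Delta$ such that $(\tau,F(\tau))\in\Omega$ for every $\tau\in\overline{\Delta}\setminus\{\tau_0\}$ while $(\tau_0,z_0)\in\pd\Omega$, where $z_0:=F(\tau_0)$. The fat-fiber hypothesis $\overline{\Omega_{\tau_0}}^\circ=\Omega_{\tau_0}$ supplies a direction $w_0\in\CC^n$ with $z_0+tw_0\in\Omega_{\tau_0}$ for small $t>0$, and a continuity and compactness argument on $\overline{\Delta}$ then shows that the perturbed analytic discs $D_t:=\{(\tau,F(\tau)+tw_0):\tau\in\overline{\Delta}\}$ lie entirely inside $\Omega$ for all $t\in(0,t_0)$.

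Next, emulating the technique used in the proof of Theorem \ref{Thm:ConvBerndt}, I would construct a non-negative psh weight $\psi$ on $\Omega$ engineered so that (i) $B_{\Omega_\tau}(z;e^{-\psi_\tau})$ stays uniformly bounded in a fixed neighborhood of the compact set $\{(\tau,F(\tau)):\tau\in\pd\Delta\}$, and (ii) $B_{\Omega_{\tau_0}}(z_0+tw_0;e^{-\psi_{\tau_0}})\to+\infty$ as $t\to0^+$. The weight would be built as a bounded psh background plus a localized barrier at $(\tau_0,z_0)$ crafted from the distance-to-boundary structure of the fat fiber $\Omega_{\tau_0}$. With $\psi$ in hand, the hypothesis makes $u$ psh on $\Omega$, so restricting to $D_t\subset\Omega$ yields psh functions $v_t(\tau):=u(\tau,F(\tau)+tw_0)$ on $\Delta$ whose boundary values on $\pd\Delta$ are uniformly bounded in $t$ by (i), while $v_t(\tau_0)\to+\infty$ by (ii). For $t$ small enough this violates the maximum principle on $\Delta$, delivering the contradiction.

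The hardest part will be the construction of $\psi$. Since psh functions admit no strict interior maxima, the required blow-up of the weighted fiber Bergman kernel at $(\tau_0,z_0)$ cannot come from a peak of $\psi$ itself but must instead be engineered through a boundary-barrier mechanism adapted to the fat-fiber geometry, and then extended globally to a non-negative psh function on all of $\Omega$ without disturbing condition (i). A further subtlety is that $\Omega_{\tau_0}$ itself need not be pseudoconvex, so the standard Bergman exhaustivity of the fiber may fail and $\psi$ must compensate accordingly. Finally, the exceptional-set phenomena that forced extra technicalities in the proof of Theorem \ref{Thm:ConvBerndt}—where the weighted Bergman kernel degenerates on measure-zero subsets of fibers—are expected to reappear here and to be handled by analogous perturbation-and-limiting arguments.
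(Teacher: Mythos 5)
Your plan shares the overall philosophy of the paper — use non-negative psh weights to probe the geometry of $\Omega$ through fiberwise Bergman kernels — but it takes a structurally different and, as written, incomplete route. The paper does not argue by contradiction via a Kontinuit\"atssatz witness disc. Instead it first proves a quantitative two-sided estimate (Lemma~\ref{Lemma3}): for $\phi_k := k\max\{\log(|z|/r),0\}$ one has $\varliminf_k B_D(0;e^{-\phi_k}) \geq 1/|D\cap\BB^{2n}(0;r)|$, and $B_D(0;e^{-\phi_k}) \leq 1/(\sigma_{2n}r^{2n})$ whenever $\BB^{2n}(0;r)\subset D$. It then feeds a \emph{family} of non-negative psh weights $\psi_k(\tau,z) := k\max\{\log|z-\gamma(\tau)|+\Re g(\tau),0\}$ (with $\gamma$ holomorphic matching the affine disc and $g$ holomorphic with $\Re g$ dominating $-\log\delta_\Omega$ on the boundary circle, via Stone--Weierstrass) into the hypothesis. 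Applying the upper bound of Lemma~\ref{Lemma3} on the boundary circle, the sub-mean-value property of $\log B$, and the lower bound of Lemma~\ref{Lemma3} together with the fat-fiber assumption at the center, one obtains the sub-mean-value inequality for $\Phi := -\log\delta_\Omega$ directly — no contradiction, no blow-up. Finally, a separate argument (using the Hartogs criterion with closed analytic discs, and exploiting $m=1$ via the open mapping theorem) shows that plurisubharmonicity of $-\log\delta_\Omega$ implies plurisubharmonicity of $-\log d_\Omega$, hence pseudoconvexity.

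The genuine gap in your proposal is precisely the step you flag as "the hardest part": you do not construct the weight $\psi$. This is not a detail but the crux, and the blow-up mechanism you envision is not clearly realizable. You want a single non-negative psh $\psi$ on $\Omega$ with $B_{\Omega_{\tau_0}}(z_0+tw_0;e^{-\psi_{\tau_0}})\to+\infty$ as $t\to 0^+$ while the kernel stays uniformly bounded over $\pd\Delta$. But a fixed non-negative psh weight cannot force the fiber Bergman kernel to blow up at an arbitrary boundary point of a possibly non-pseudoconvex fiber; the blow-up must come from shrinking the effective domain, which is exactly what the paper's diverging family $\psi_k$ accomplishes, and which must be coupled with the corresponding boundedness on the boundary circle (the upper bound in Lemma~\ref{Lemma3}) to make the sub-mean-value argument close. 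In addition, your Kontinuit\"atssatz normalization is delicate: $(\tau_0,z_0)\in\pd\Omega$ need not imply $z_0\in\pd\Omega_{\tau_0}$ (the projection $\tau_0$ could itself lie on $\pd\,p(\Omega)$), and even when $z_0\in\pd\Omega_{\tau_0}$, the hypothesis $\overline{\Omega_{\tau_0}}^\circ=\Omega_{\tau_0}$ does not by itself provide a fixed ray $z_0+tw_0$ entering $\Omega_{\tau_0}$ for small $t>0$ (the fiber could have a cusp at $z_0$). These points would all need to be repaired, whereas the paper's route via $-\log\delta_\Omega$ sidesteps them entirely.
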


The regularity assumption $\overline{\Omega_\tau}^\circ=\Omega_\tau$ $(\forall\tau)$ is necessary. For example, Theorem \ref{Thm:Berndt} also holds for $\Omega=\triangle^2\setminus\{(0,0)\}$, but $\Omega$ is not pseudoconvex.

Recall that a domain $D\subset\CC^N$ is pseudoconvex if and only if $-\log d_D$ is psh on $D$, where $d_D$ denotes the Euclidean distance to $\pd D$. Therefore, the key to proving Theorem \ref{Thm:CharPsc} is constructing a sequence of non-negative psh functions $\psi_k$ on $\Omega$ such that the Bergman kernels $B_{\Omega_\tau}(z;e^{-\psi_{k,\tau}})$ provide sufficient information about $d_\Omega$.\\

\textbf{Acknowledgment.} The authors sincerely thank their PhD supervisor Prof. Xiangyu Zhou for his generous help over the years. The authors also want to thank Zhi Li, Zhuo Liu and Xujun Zhang for useful discussions on related topics.

\section{The necessity of convexity in Pr\'ekopa's theorem}  \label{Sec:Prekopa}

\subsection{The necessity of function convexity} \label{Sec:Prekopa-Fun}

We recall some necessary notations. Throughout this article,
$$ \BB^N(x_0;r) := \{x\in\RR^N: |x-x_0|<r \} $$
denotes an Euclidean ball in $\RR^N$. Therefore, a general ball in $\CC^n$ will be denoted by $\BB^{2n}(z_0;r)$. For simplicity, $\BB^N:=\BB^N(0;1)$ and $\sigma_N:=\textup{Vol}(\BB^N)$.

In the following, we prove the first part of Theorem \ref{Thm:ConvPrekopa}, i.e. $\phi$ is convex under the assumptions. Given two distinct points $(t_0,x_0),(t_1,x_1)\in\Omega$ such that the line segment connecting them is contained in $\Omega$, we need to prove
\begin{equation}\begin{aligned}
\phi\big(st_1+(1-s)t_0, sx_1+(1-s)&x_0\big) \\
\leq s\phi(t_1,x_1) + (1-s)&\phi(t_0,x_0), \quad \forall s\in(0,1).
\end{aligned}\end{equation}

Firstly, we consider the case of $t_0\neq t_1$. We may take a matrix $A\in\mathrm{Mat}_\RR(n,m)$ such that $A(t_1-t_0)=x_1-x_0$, then
$$ a:\RR^m\to\RR^n, \quad t\mapsto A(t-t_0)+x_0, $$
is an affine map satisfying $a(t_0)=x_0$ and $a(t_1)=x_1$. For convenience,
$$ U := \big\{t\in p(\Omega): (t,a(t))\in\Omega \big\}. $$
We want to construct a sequence of lower-bounded convex functions $\psi_k$ on $\Omega$ such that $\tilde{\psi}_k(t)$ converges pointwise to $\phi(t,a(t))$ on $U$, where $\tilde{\psi}_k$ is defined as in \eqref{Eq:ConvPrekopa}.

The construction is based on the following observation on ``localization": let $u$ be a continuous function on $\RR^n$, then
$$ \lim_{k\to+\infty} \int_{\RR^n} e^{-u(x)-k\max\{|x|-r,0\}} d\lambda = \int_{\BB^n(0;r)} e^{-u}d\lambda $$
for any $r>0$ and
$$ \lim_{r\to0^+} \frac{1}{\sigma_nr^n} \int_{\BB^n(0;r)} e^{-u} d\lambda = e^{-u(0)}. $$
We will apply a diagonal construction to achieve these two steps of localization simultaneously.

In the following, we define a sequence of lower-bounded convex functions on $\Omega$ by
\begin{equation}\label{Eq:psi_k}
\psi_k(t,x) := k^2\max\{|x-a(t)|-k^{-1},0\} + \log(\sigma_n k^{-n}), \quad k\in\ZZ_+.
\end{equation}
By assumption, the function $\tilde{\psi}_k$ defined by
\begin{equation}\label{Eq:tilde_psi_k}
e^{-\tilde{\psi}_k(t)} = \int_{\Omega_t} e^{-\phi(t,x)-\psi_k(t,x)} d\lambda_x 
\end{equation}
is convex on $p(\Omega)$. The key point of the proof is the following lemma.

\begin{lemma} \label{Lemma1}
For any $t\in U$, we have
$$ \lim_{k\to+\infty} \tilde{\psi}_k(t) = \phi(t,a(t)). $$
\end{lemma}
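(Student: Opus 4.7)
The goal is to show that $I_k(t) := e^{-\tilde{\psi}_k(t)} \to e^{-\phi(t,a(t))}$ as $k\to+\infty$; the stated limit then follows by taking logarithms. Substituting \eqref{Eq:psi_k} into \eqref{Eq:tilde_psi_k} gives
$$ I_k(t) = \frac{k^n}{\sigma_n}\int_{\Omega_t} e^{-\phi(t,x)}\, e^{-k^2\max\{|x-a(t)|-k^{-1},\,0\}}\, d\lambda_x, $$
and I would read the density $\sigma_n^{-1}k^n e^{-k^2\max\{|x-a(t)|-k^{-1},\,0\}}$ as an approximation-to-the-identity centred at $a(t)$: it is constant equal to $\sigma_n^{-1}k^n$ on the ball $\BB^n(a(t);k^{-1})$ of volume $\sigma_n k^{-n}$ (so integrates to $1$ there), and decays super-exponentially outside. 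Paired against the continuous function $e^{-\phi(t,\cdot)}$, it should reproduce the value at $a(t)$ in the limit.

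Since $(t,a(t))\in\Omega$ and $\Omega$ is open, I would fix $r_0>0$ small enough that $\overline{\BB^n(a(t);r_0)}\subset\Omega_t$ and $e^{-\phi(t,\cdot)}\leq M$ on $\overline{\BB^n(a(t);r_0/2)}$ (using continuity of $\phi$). For all $k$ with $k^{-1}<r_0/2$, the plan is to split
$$ I_k(t) = I_k^{(1)}(t) + I_k^{(2)}(t) + I_k^{(3)}(t), $$
the three pieces corresponding to integration over $|x-a(t)|\leq k^{-1}$, $k^{-1}<|x-a(t)|\leq r_0/2$, and $|x-a(t)|>r_0/2$ respectively, and to handle each separately. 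The inner piece reduces to $I_k^{(1)}(t) = \sigma_n^{-1}k^n\int_{\BB^n(a(t);k^{-1})} e^{-\phi(t,x)}\, d\lambda_x$, i.e.\ the ball average of $e^{-\phi(t,\cdot)}$ over balls shrinking to $a(t)$, and continuity at $a(t)$ immediately gives $I_k^{(1)}(t)\to e^{-\phi(t,a(t))}$.

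The outer piece is the easiest: the exponential factor is at most $e^{-k^2 r_0/4}$ throughout the region, so
$$ I_k^{(3)}(t)\leq \frac{k^n}{\sigma_n}\, e^{-k^2 r_0/4}\int_{\Omega_t} e^{-\phi(t,x)}\, d\lambda_x, $$
which tends to zero because the last integral is finite by hypothesis and $k^n e^{-k^2 r_0/4}\to 0$. The only delicate term is the intermediate annulus. Using $e^{-\phi(t,x)}\leq M$ there and passing to radial coordinates yields
$$ I_k^{(2)}(t) \leq M k^n \cdot n \int_{k^{-1}}^{r_0/2} r^{n-1}\, e^{-k^2(r-k^{-1})}\, dr, $$
and the substitution $s=k^2(r-k^{-1})$ turns this into $Mnk^{-1}\int_0^\infty(k^{-1}+s/k^2)^{n-1}\, e^{-s}\, ds = O(k^{-1})$, hence $I_k^{(2)}(t)\to 0$.

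Combining the three estimates gives $I_k(t)\to e^{-\phi(t,a(t))}$, and hence the lemma. The main (and only nontrivial) obstacle is the intermediate annulus $I_k^{(2)}(t)$, where the prefactor $k^n$ has to be beaten by the super-exponential decay $e^{-k^2\cdot\mathrm{dist}}$; the radial rescaling makes the competition transparent (a unit of radial distance costs $k^2$ in the exponent but buys only $k$ in the volume element, producing decay of order $k^{-1}$). The inner and outer contributions are standard approximation-to-the-identity and tail estimates respectively.
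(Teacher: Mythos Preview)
Your proof is correct and follows essentially the same three-region decomposition as the paper: inner ball handled by continuity, outer tail by the integrability hypothesis $\int_{\Omega_t}e^{-\phi_t}<\infty$, and intermediate annulus by a radial estimate. One minor slip: after the substitution $s=k^2(r-k^{-1})$ the prefactor should be $Mnk^{n-2}$ rather than $Mnk^{-1}$, but since $(k^{-1}+s/k^2)^{n-1}\leq k^{-(n-1)}(1+s)^{n-1}$ the product is still $O(k^{-1})$ and your conclusion stands.
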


\begin{proof}
We choose $R>0$ such that $\overline{\BB^n(a(t);R)} \subset \Omega_t$. For $k>1/R$, we write
\begin{align*}
\int_{\Omega_t} e^{-\phi(t,x)-\psi_k(t,x)} d\lambda_x
& = \int_{\{|x-a(t)|\leq k^{-1}\}} + \int_{\{k^{-1}<|x-a(t)|<R\}} + \int_{\Omega_t\setminus\BB^n(a(t);R)} \\
& =: \mathbf{I} + \mathbf{II} + \mathbf{III}.
\end{align*}
Since $\psi_k(t,x)=\log(\sigma_nk^{-n})$ for $|x-a(t)|\leq k^{-1}$ and $\phi$ is continuous, we have
\begin{equation*}
\lim_{k\to+\infty} \mathbf{I} = \lim_{k\to+\infty} \frac{k^n}{\sigma_n} \int_{\{|x-a(t)|\leq k^{-1}\}} e^{-\phi(t,x)} d\lambda_x = e^{-\phi(t,a(t))}.
\end{equation*}
We need to show that the remaining terms $\mathbf{II}$ and $\mathbf{III}$ make no contribution. Let $C$ denotes the supremum of $e^{-\phi_t}$ over $\overline{\BB^n(a(t);R)}$, then
\begin{align*}
\mathbf{II} & \leq \frac{C}{\sigma_n} \int_{\{k^{-1}<|x-a(t)|<R\}} k^n e^{-k^2(|x-a(t)|-k^{-1})} d\lambda_x \\
& = nC \int_{1/k}^R k^n e^{-k^2r+k} r^{n-1} dr = nC \int_{1}^{kR} e^{-kr+k} r^{n-1} dr \\	
& \leq nC \int_0^\infty e^{-kr}(r+1)^{n-1} dr.
\end{align*}
By Lebesgue's dominated convergence theorem, $\lim_{k\to+\infty} \mathbf{II} = 0$. Notice that,
\begin{align*}
\mathbf{III} & = \frac{k^n}{\sigma_n} \int_{\Omega_t\setminus \BB^n(a(t);R)} e^{-\phi(t,x)-k^2(|x-a(t)|-k^{-1})} d\lambda_x \\
& \leq \frac{1}{\sigma_n} k^ne^{-k^2R+k} \int_{\Omega_t} e^{-\phi(t,x)} d\lambda_x.
\end{align*}
Since $\int_{\Omega_t} e^{-\phi_t} d\lambda < +\infty$, it follows that $\lim_{k\to+\infty} \mathbf{III} = 0$. In summary,
\begin{equation*}
\lim_{k\to+\infty} \int_{\Omega_t} e^{-\phi(t,x)-\psi_k(t,x)} d\lambda_x = e^{-\phi(t,a(t))}.
\end{equation*}
This completes the proof of the lemma.
\end{proof}

Since $\tilde{\psi}_k$ are convex functions on $p(\Omega)$ and the line segment connecting $t_0$ and $t_1$ is contained in $U\subset p(\Omega)$, we have
$$ \tilde{\psi}_k\big(st_1+(1-s)t_0\big) \leq s\tilde{\psi}_k(t_1)+(1-s)\tilde{\psi}_k(t_0), \quad \forall s\in(0,1). $$
Since $a(t)$ is affine, it follows from Lemma \ref{Lemma1} that
$$ \phi\big(st_1+(1-s)t_0, sx_1+(1-s)x_0\big) \leq s\phi(t_1,x_1) + (1-s)\phi(t_0,x_0), \quad \forall s\in(0,1). $$

Finally, we consider the case where $t_1=t_0$. Let $\eta=(1,0,\cdots,0)\in\RR^m$, then the triangle spanned by $(t_0,x_0)$, $(t_1,x_1)$ and $(t_1+\eps\eta,x_1)$ is contained in $\Omega$, whenever $\eps>0$ is small enough. Applying the above discussion to $(t_0,x_0)$ and $(t_1+\eps\eta,x_1)$, the desired inequality follows from a simple limit argument.

\subsection{The necessity of domain convexity} \label{Sec:Prekopa-Dom}

Since function convexity is a local property, Theorem \ref{Thm:Prekopa} remains true even if we replace $\Omega$ by $p^{-1}(U)\cap\Omega$, where $U$ is a domain in $\RR_t^m$ that is not necessarily convex. Therefore, when discussing the necessity of domain convexity, we always assume that $p(\Omega)$ is convex.

In the following, we prove the second part of Theorem \ref{Thm:ConvPrekopa}, namely that $\overline{\Omega}$ is convex, provided $p(\Omega)$ is convex. To prove the convexity of $\overline{\Omega}$, it suffices to show that if $(t_0,x_0),(t_1,x_1)\in\Omega$, then
\begin{equation} \label{Eq:MidPt}
(t^*,x^*) := \left(\frac{t_0+t_1}{2}, \frac{x_0+x_1}{2}\right) \in\overline{\Omega}. 
\end{equation}
The general case where $(t_0,x_0),(t_1,x_1)\in\overline{\Omega}$ will follow from a limit argument.

Firstly, we consider the case of $t_0\neq t_1$. We take an affine map $a:\RR^m\to\RR^n$ such that $a(t_0)=x_0$ and $a(t_1)=x_1$, then we define $\psi_k(t,x)$ and $\tilde{\psi}_k(t)$ by \eqref{Eq:psi_k} and \eqref{Eq:tilde_psi_k}. By assumption, $\tilde{\psi}_k$ are convex functions on $p(\Omega)$. Since $p(\Omega)$ is convex, the line segment connecting $t_0$ and $t_1$ is contained in $p(\Omega)$. Therefore,
$$ \tilde{\psi}_k(t^*) \leq \frac{\tilde{\psi}_k(t_0)+\tilde{\psi}_k(t_1)}{2}. $$
Letting $k\to+\infty$, it follows from Lemma \ref{Lemma1} that
$$ \varlimsup_{k\to+\infty} \tilde{\psi}_k(t^*) \leq \frac{\phi(t_0,x_0)+\phi(t_1,x_1)}{2} < +\infty. $$

Assume that \eqref{Eq:MidPt} is false, then we may choose $R>0$ such that $\overline{\BB^n(x^*;R)}$ is disjoint from $\Omega_{t^*}$. By the proof of Lemma \ref{Lemma1}, for $k>1/R$, we have
$$ e^{-\tilde{\psi}_k(t^*)} \leq \frac{1}{\sigma_n} k^ne^{-k^2R+k} \int_{\Omega_{t^*}}e^{-\phi_{t^*}} d\lambda. $$
Since $\int_{\Omega_{t^*}} e^{-\phi_{t^*}} d\lambda < +\infty$, it follows that
$$ \lim_{k\to+\infty} \tilde{\psi}_k(t^*) = +\infty. $$
This leads to a contradiction. Therefore, we prove \eqref{Eq:MidPt}.

If $t_0=t_1$, by applying the above discussion to $(t_0-\eta,x_0)$ and $(t_1+\eta,x_1)$, where $\eta\in\RR^m$ and $0<|\eta|\ll1$, we also have \eqref{Eq:MidPt}.

\subsection{Counterexamples} \label{Sec:Prekopa-Ex}

In this section, we construct some counterexamples concerning Theorem \ref{Thm:ConvPrekopa}. Firstly, we can only expect the convexity of $\overline{\Omega}$, rather than that of $\Omega$.

\begin{example} \label{Ex:ConvPrekopa1}
We consider the domain $\Omega:=\BB^{m+n}\setminus\{0\}$ in $\RR_t^m\times\RR_x^n$. Given any convex function $\phi(t,x)$ on $\Omega$, one can extend it into a convex function on $\BB^{m+n}$. Since a single point is negligible for Lebesgue integration, by Pr\'ekopa's theorem,
$$ -\log\left(\int_{\Omega_t}e^{-\phi_t}d\lambda\right) $$
is a convex function on $p(\Omega)=\BB^m$. However, $\Omega$ is not convex. 
\end{example}

Moreover, the additional convex weight $\psi$ in Theorem \ref{Thm:ConvPrekopa} is necessary.

\begin{example} \label{Ex:ConvPrekopa2}
Let $u(t)$ be a convex function on $U\subset\RR^m$ and $v(x)$ a continuous function on $D\subset\RR^n$. We consider the product domain $\Omega:=U\times D$ and the function $\phi(t,x):=u(t)+v(x)$. Clearly,
$$ -\log\left(\int_D e^{-\phi(t,\cdot)}d\lambda\right) = u(t) - \log\left(\int_D e^{-v}d\lambda\right) $$
is a convex function on $U$. However, $D$ and $\phi$ are not necessarily convex.
\end{example}

Notice that, if we reverse the roles of $U$ and $D$ in Example \ref{Ex:ConvPrekopa2}, and further require the convexity of
$$ -\log\left(\int_U e^{-\phi(\cdot,x)}d\lambda\right) = v(x) - \log\left(\int_U e^{-u}d\lambda\right) $$
then $\phi$ must be convex. This observation suggests that the additional convex weight $\psi$ in Theorem \ref{Thm:ConvPrekopa} might be unnecessary if we take into consideration fibrations in \textit{all} directions.

To be concrete, let $\phi$ be a continuous function on $\RR^2$. Given linearly independent vectors $\mathbf{e}_1,\mathbf{e}_2\in\RR^2$, then $L_t:=\{t\mathbf{e}_1+s\mathbf{e}_2:s\in\RR\}$ is a family of parallel lines parameterized by $t\in\RR$, which defines a fibration of $\RR^2$ along the $\mathbf{e}_2$-direction. If
$$ t \mapsto \int_{L_t}e^{-\phi} := \int_\RR e^{-\phi(t\mathbf{e}_1+s\mathbf{e}_2)} ds$$
is log-concave for any choice of $\mathbf{e}_1,\mathbf{e}_2\in\RR^2$, is $\phi$ necessarily convex on $\RR^2$?

This hypothesis appears to coincide with our knowledge on the \textit{inverse Radon transform}, which is closely related to medical imaging such as X-ray scans: it allows us to recover a suitable function defined on $\RR^2$ by knowing its integral over every line. However, we still have counterexamples.

To illustrate, we start with a radially symmetric convex function $\phi_0$ such that the marginal integration of $e^{-\phi_0}$ is \textit{strictly} log-concave. By perturbing $\phi_0$, we obtain another radially symmetric function $\phi$ that is not convex. If the perturbation is small enough, the marginal integration of $e^{-\phi}$ remains log-concave. Since $e^{-\phi}$ is radially symmetric, the marginal integration along any direction are equivalent.

Starting with $\phi_0(x,y)=x^2+y^2$ on $\RR^2$, we obtain the following counterexample.
 
\begin{example} \label{Ex:ConvPrekopa3}
Consider the radially symmetric function
\begin{equation}
    \phi(x,y)=|x^2+y^2-\eps^2|
\end{equation}
on $\RR^2$, where $\eps\in(0,1)$ is a constant. When $\eps$ is sufficiently small, for any linearly independent $\mathbf{e}_1,\mathbf{e}_2\in\RR^2$, the function
$$ t\mapsto -\log\left( \int_\RR e^{-\phi(t\mathbf{e}_1+s\mathbf{e}_2)} ds \right) $$
is convex on $\RR$. Even so, $\phi$ itself is not convex.
\end{example}

\begin{proof}
Since $\phi$ is radially symmetric on $\RR^2$, we may assume that $\mathbf{e}_1=(a,b)$ and $\mathbf{e}_2=(0,c)$, then
$$ \int_\RR e^{-\phi(t\mathbf{e}_1+s\mathbf{e}_2)} ds = \int_{\RR} e^{-\phi(at,bt+cs)} ds = \frac{1}{|c|} \int_\RR e^{-\phi(at,y)} dy. $$
Therefore, it suffices to show that, when $\eps\ll1$, the function $\tilde{\phi}$ defined by
$$ e^{-\tilde{\phi}(x)} = \int_\RR e^{-\phi(x,y)} dy $$
is convex on $\RR$. For $|x|\geq\eps$, we have
$$ \tilde{\phi}(x) = -\log\left(e^{\eps^2-x^2}\int_\RR e^{-y^2}dy\right) = x^2 - \log\left(e^{\eps^2}\sqrt{\pi}\right). $$
For $|x|<\eps$, we write
$$ F(x) := e^{-\tilde{\phi}(x)} = 2e^{\eps^2-x^2} A(x) + 2e^{x^2-\eps^2} B(x), $$
where
$$ A(x) := \int_{\sqrt{\eps^2-x^2}}^\infty e^{-y^2}dy \quad\text{and}\quad B(x) := \int_0^{\sqrt{\eps^2-x^2}} e^{y^2}dy. $$
Clearly, $\frac{\sqrt{\pi}}{2}-\eps \leq A(x) \leq \frac{\sqrt{\pi}}{2}$ and $0\leq B(x)\leq e\eps$. By direct computations,
\begin{align*}
F'(x) & = - 4xe^{\eps^2-x^2}A(x) + 4xe^{x^2-\eps^2}B(x), \\
F''(x) & = (8x^2-4)e^{\eps^2-x^2}A(x) + (8x^2+4)e^{x^2-\eps^2}B(x) - \frac{8x^2}{\sqrt{\eps^2-x^2}}.
\end{align*}
Consequently,
\begin{align*}
F'(x)F'(x)-F(x)F''(x) = &~ \frac{8x^2}{\sqrt{\eps^2-x^2}}F(x) + 8\big(e^{\eps^2-x^2}A(x)\big)^2 \\
&~ - 64x^2A(x)B(x) - 8\big(e^{x^2-\eps^2}B(x)\big)^2.
\end{align*}
When $\eps$ is sufficiently small, the right hand side is positive for $x\in(-\eps,\eps)$, and hence
\begin{equation*}
\tilde{\phi}''(x) = \frac{F'(x)F'(x)-F(x)F''(x)}{F(x)^2} > 0, \quad \forall x\in(-\eps,\eps).
\end{equation*}
Moreover, $\tilde{\phi}''(x)>0$ for $|x|>\eps$ and
\begin{equation*}
\lim_{x\to\eps^-}\tilde{\phi}'(x) = \lim_{x\to\eps^-} \frac{F'(x)F'(x)-F(x)F''(x)}{F(x)^2} = 2\eps =
\lim_{x\to\eps^+}\tilde{\phi}'(x).
\end{equation*}
We conclude that $\tilde{\phi}$ is a convex function on $\RR$.
\end{proof}

\subsection{A converse of the minimum principle}

Pr\'ekopa's theorem is a generalization of the minimum principle for convex functions. In this section, we prove a converse result for the minimum principle, which is parallel to Theorem \ref{Thm:ConvPrekopa}. 

\begin{theorem}
Let $\Omega$ be a domain in $\RR_t^m\times\RR_x^m$ and $\phi$ a continuous function on $\Omega$ such that $\inf_x \phi(t,x)$ is not identically $-\infty$ on $p(\Omega)$, where $p$ is the projection from $\RR^m\times\RR^n$ to $\RR^m$. Assume that for any non-negative convex function $\psi$ on $\Omega$, the function
$$ \inf_x \big( \phi(t,x)+\psi(t,x) \big) $$
is also convex on $p(\Omega)$. Then $\phi$ must be a convex function on $\Omega$. Moreover, if $p(\Omega)$ is convex, then $\overline{\Omega}$ is also convex.
\end{theorem}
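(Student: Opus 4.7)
\emph{Plan.} The proof parallels that of Theorem~\ref{Thm:ConvPrekopa}: the log-integral functional $-\log\int_{\Omega_t}e^{-\phi-\psi}d\lambda$ is replaced by the marginal infimum $\inf_x(\phi+\psi)$, and the exponential localizing weight is replaced by a linear penalty. Given two points $(t_0,x_0),(t_1,x_1)\in\Omega$ whose connecting segment lies in $\Omega$ with $t_0\neq t_1$ (the case $t_0=t_1$ reduces to this by the perturbation trick at the end of Section~\ref{Sec:Prekopa-Fun}), I would choose an affine map $a:\RR^m\to\RR^n$ with $a(t_j)=x_j$ and set $U:=\{t\in p(\Omega):(t,a(t))\in\Omega\}$. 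Define the non-negative convex functions on $\Omega$
$$ \psi_k(t,x):=k\max\{|x-a(t)|-k^{-1},0\},\qquad k\in\ZZ_+. $$
By hypothesis, each $F_k(t):=\inf_x\bigl(\phi(t,x)+\psi_k(t,x)\bigr)$ is convex on $p(\Omega)$.

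\emph{Key lemma and conclusions.} The analog of Lemma~\ref{Lemma1} to be proved is that $\lim_{k\to+\infty}F_k(t)=\phi(t,a(t))$ for every $t\in U$. The upper bound $F_k(t)\leq\phi(t,a(t))$ is immediate from $x=a(t)$. For the matching lower bound, given $\eta>0$, continuity yields $\delta>0$ with $\overline{\BB^n(a(t);\delta)}\subset\Omega_t$ and $\phi(t,\cdot)\geq\phi(t,a(t))-\eta$ on this ball; for $x\in\Omega_t$ with $|x-a(t)|\geq\delta$ we have $\psi_k(t,x)\geq k\delta-1$, so---granting the lower bound $\phi_t\geq-M_t$ on $\Omega_t$ discussed below---$(\phi+\psi_k)(t,x)\geq -M_t+k\delta-1\geq\phi(t,a(t))-\eta$ for all sufficiently large $k$, while on $|x-a(t)|<\delta$ the estimate holds automatically from $\psi_k\geq0$. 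Passing to the limit in the convexity inequality $F_k(st_1+(1-s)t_0)\leq sF_k(t_1)+(1-s)F_k(t_0)$ then yields the required convexity of $\phi$ along the segment. For the second assertion, assume $p(\Omega)$ is convex and, for contradiction, that the midpoint $(t^*,x^*)$ of $(t_0,x_0)$ and $(t_1,x_1)$ is not in $\overline{\Omega}$; choose $R>0$ with $\overline{\BB^n(x^*;R)}\cap\Omega_{t^*}=\emptyset$. Since $a(t^*)=x^*$, every $x\in\Omega_{t^*}$ satisfies $|x-a(t^*)|\geq R$, hence $\psi_k(t^*,x)\geq kR-1$ and $F_k(t^*)\geq -M_{t^*}+kR-1\to+\infty$, contradicting the interpolation bound $F_k(t^*)\leq\tfrac{1}{2}\bigl(\phi(t_0,x_0)+\phi(t_1,x_1)\bigr)$; the $t_0=t_1$ case reduces to this by the perturbation used at the end of Section~\ref{Sec:Prekopa-Dom}.

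\emph{Main obstacle.} The one genuinely new point is the local lower bound $\phi_t\geq-M_t$ on $\Omega_t$ used above: without it the linear penalty $\psi_k$ cannot dominate and $F_k(t)$ could degenerate to $-\infty$. This is precisely where the hypothesis ``$\inf_x\phi\not\equiv-\infty$'' is used. Applying the main assumption with $\psi\equiv0$, the marginal $t\mapsto\inf_x\phi(t,x)$ is itself convex on $p(\Omega)$. A convex function on an open convex set that attains the value $-\infty$ at one point is identically $-\infty$ there, so the set $\{t\in p(\Omega):\inf_x\phi(t,x)=-\infty\}$ is both open and closed in the connected open set $p(\Omega)$; being a proper subset, it must be empty. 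Hence $\phi_t$ is bounded below on $\Omega_t$ for every $t\in p(\Omega)$, which closes the gap and makes the entire argument go through.
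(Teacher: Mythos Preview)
Your proposal is correct and follows essentially the same route as the paper's proof: the paper uses the slightly simpler penalty $\psi_k(t,x)=k|x-a(t)|$ (without the $k^{-1}$ truncation you carried over from the Pr\'ekopa argument), but the limiting lemma, the convexity-of-$\phi$ step, the midpoint contradiction for domain convexity, and the $t_0=t_1$ perturbation are all the same. Your treatment of the ``main obstacle'' is in fact more explicit than the paper's, which simply asserts that a convex function on $p(\Omega)$ not identically $-\infty$ is finite everywhere; your open--closed argument on the connected open set $p(\Omega)$ supplies that justification.
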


\begin{proof}
Given two distinct points $(t_0,x_0),(t_1,x_1)\in\Omega$ such that the line segment connecting them is contained in $\Omega$, we need to prove
\begin{equation*}\begin{aligned}
\phi\big(st_1+(1-s)t_0, sx_1+(1-s)&x_0\big) \\
\leq s\phi(t_1,x_1) + (1-s)&\phi(t_0,x_0), \quad \forall s\in(0,1).
\end{aligned}\end{equation*}

Firstly, we consider the case of $t_0\neq t_1$. We may take an affine map $a:\RR^m\to\RR^n$ such that $a(t_0)=x_0$ and $a(t_1)=x_1$. By assumption,
$$ u_k(t) := \inf_x \big( \phi(t,x) + k|x-a(t)| \big), \quad k\in\ZZ_+, $$
are convex functions on $p(\Omega)$. We fix a point $t\in p(\Omega)$ such that $(t,a(t))\in\Omega$. Clearly, $u_k(t)\leq \phi(t,a(t))$. Given $\eps>0$, there exists a constant $\delta>0$ such that $\phi(t,x)\geq\phi(t,a(t))-\eps$ whenever $|x-a(t)|<\delta$. Since
\begin{gather*}\begin{cases}
\phi(t,x)+ k|x-a(t)| \geq \phi(t,a(t))-\eps, & |x-a(t)|<\delta, \\
\phi(t,x)+ k|x-a(t)| \geq \inf_x\phi(t,x) + k\delta, & |x-a(t)|\geq\delta,
\end{cases}\end{gather*}
we have $u_k(t) \geq \phi(t,a(t))-\eps$ for $k\gg1$. Here, we use the fact that $\inf_x \phi(t,x)>-\infty$ (because $\inf_x \phi(t,x)$ is convex on $p(\Omega)$ and is not identically $-\infty$). Since $\eps>0$ is arbitrary, we have
$$ \lim_{k\to+\infty} u_k(t) = \phi(t,a(t)). $$
Then the convexity of $u_k$ implies the desired inequality. The case of $t_0=t_1$ follows from a simple limit argument.

In the following, we assume that $p(\Omega)$ is convex. If $\overline{\Omega}$ is not convex, then there exist $(t_0,x_0),(t_1,x_1)\in\Omega$ such that their mid-point $(t^*,x^*):=(\frac{t_0+t_1}{2},\frac{x_0+x_1}{2})$ is not contained in $\overline{\Omega}$. After a slight perturbation, we may assume that $t_0\neq t_1$. Let $a(t)$ and $u_k(t)$ be as before, then $a(t^*)=x^*$,
$$ \lim_{k\to+\infty}u_k(t_0)=\phi(t_0,x_0) \quad\text{and}\quad \lim_{k\to+\infty}u_k(t_1)=\phi(t_1,x_1). $$
Since $u_k$ are convex,
$$ \varlimsup_{k\to\infty} u_k(t^*) \leq \frac{\phi(t_0,x_0)+\phi(t_1,x_1)}{2} < +\infty. $$
Since $(t^*, x^*)\notin\overline{\Omega}$, there exists a constant $R>0$ such that $|x-x^*|\geq R$ for any $(t^*,x)\in\Omega$, and then
$$ u_k(t^*)\geq \inf_x \phi(t^*, x)+kR. $$
Consequently, $\lim_{k\to+\infty} u_k(t^*)=+\infty$, which leads to a contradiction.
\end{proof}

Similar to Theorem \ref{Thm:ConvPrekopa}, the above theorem is false without the additional convex function $\psi$. We consider the function
$$ \phi(t,x)=|t^2+x^2-1| $$
on $\RR^2$, then $\inf_x\phi(t,x) = \max\{t^2-1,0\}$ is convex on $\RR$. However, $\phi$ itself is not convex.

\section{The necessity of plurisubharmonicity in Berndtsson's theorem} \label{Sec:Berndt}

\subsection{A special case} \label{Sec:SpecialCase}

To highlight the key idea of the proof, in this section, we will prove Theorem \ref{Thm:ConvBerndt} under two additional assumptions:
\begin{itemize}
    \item[(i)] $\varphi$ is continuous on $\Omega$;
    \item[(ii)] the fiberwise Bergman kernel $B_{\Omega_\tau}(z;e^{-\varphi_\tau})$ is nowhere zero on $\Omega$.
\end{itemize}
We will use these two conditions as less as possible. Notice that, if $\Omega$ is bounded and $\varphi$ is continuous up to the boundary, then the second condition is satisfied.

The proof is parallel to that of Theorem \ref{Thm:ConvPrekopa}. To prove the plurisubharmonicity of $\varphi$, we need to verify the following mean-value inequality
\begin{equation} \label{Eq:SubMeanC0}
	\varphi(\tau_0,z_0) \leq \frac{1}{2\pi} \int_0^{2\pi} \varphi(\tau_0+e^{i\theta}\eta, z_0+e^{i\theta}\xi) d\theta
\end{equation}
for any \textit{affine analytic disc}
$$ (\tau_0,z_0)+(\eta,\xi)\overline{\triangle} := \big\{ (\tau_0+w\eta, z_0+w\xi): w\in\overline{\triangle} \big\} $$
contained in $\Omega$, where $(\tau_0,z_0)\in\Omega$ is the center, $(\eta,\xi)\in\CC^m\times\CC^n$ is the radius and $\overline{\triangle}:=\{w\in\CC:|w|\leq1\}$ is the closed unit disc in $\CC$.

Firstly, we assume that $\eta\neq0$, then we can choose a matrix $A\in\mathrm{Mat}_\CC(n,m)$ such that $A\eta=\xi$. Define
$$ a(\tau):=z_0+A(\tau-\tau_0) \quad\text{and}\quad U := \{\tau\in\CC^m: (\tau,a(\tau)) \in \Omega \}. $$
Notice that the closed disc $\tau_0+\eta\overline{\triangle}$ is contained in $U$. As mentioned in the Introduction, we want to construct a sequence of lower-bounded psh functions $\psi_k$ on $\Omega$ such that $u_k(\tau):=\log B_{\Omega_\tau}(a(\tau);e^{-\varphi_\tau-\psi_{k,\tau}})$ converges to $\varphi(\tau,a(\tau))$. The construction is based on the following results on ``localization":
\begin{itemize}
    \item (Lemma 3.1 of \cite{Berndtsson06}) Let $D_0$ and $D_1$ be bounded domains in $\CC^n$, with $D_0\Subset D_1$. Let $\{\phi_j\}_{j=0}^\infty$ be a sequence of continuous functions on $D_1$ such that $\phi_j=\phi$ in $\overline{D_0}$ and that $\phi_j\nearrow+\infty$ almost everywhere in $D_1\setminus\overline{D_0}$. Assume that $A^2(D_1;e^{-\phi_0})$ is dense in $A^2(D_0;e^{-\phi_0})$, then $B_{D_1}(z;e^{-\phi_j})$ increases to $B_{D_0}(z;e^{-\phi})$ for any $z\in D_0$.
    \item Let $\phi$ be a continuous function defined on a domain $D\subset\CC^n$, then
    $$ \lim_{r\to0^+} \sigma_{2n}r^{2n} B_{\BB^{2n}(z;r)}(z;e^{-\phi}) = e^{\phi(z)}, \quad \forall z\in D. $$
\end{itemize}
As in Section \ref{Sec:Prekopa-Fun}, we will apply a diagonal construction to achieve these two steps of localization simultaneously.

In the following, we define a sequence of psh functions on $\Omega$,
\begin{equation} \label{Eq:PshTwist}
	\psi_k(\tau,z) := k\max\{ \log(k|z-a(\tau)|),0 \} + \log(\sigma_{2n}k^{-2n}), \quad k\in\ZZ_+.
\end{equation}
Since $\psi_k$ is bounded below, by assumption,
$$ (\tau,z) \mapsto \log B_{\Omega_\tau}(z;e^{-\varphi_\tau-\psi_{k,\tau}}) $$
is a psh function on $\Omega$. We have the following counterpart of Lemma \ref{Lemma1}.

\begin{lemma} \label{Lemma2}
For any $\tau\in U$, one has
\begin{equation} \label{Eq:Lemma2}
\lim_{k\to+\infty} B_{\Omega_\tau}(a(\tau);e^{-\varphi_\tau-\psi_{k,\tau}}) = e^{\varphi(\tau,a(\tau))}.
\end{equation}
\end{lemma}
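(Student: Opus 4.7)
The plan is to sandwich the fiberwise Bergman kernel in \eqref{Eq:Lemma2} between matching bounds that both tend to $e^{\varphi(\tau,a(\tau))}$, realizing the two-step localization scheme outlined just before the lemma. Throughout, I fix $\tau \in U$ and choose $R > 0$ with $\overline{\BB^{2n}(a(\tau);R)} \subset \Omega_\tau$, and work with $k > 1/R$.

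For the upper bound, I would use domain monotonicity of the Bergman kernel: if $D_0 \subset D_1$ carry the same weight, restriction from $D_1$ to $D_0$ cannot increase the $L^2$-norm, hence $B_{D_1} \leq B_{D_0}$ at points of $D_0$. Since $\psi_{k,\tau}$ reduces to the constant $\log(\sigma_{2n}k^{-2n})$ on $\BB^{2n}(a(\tau);1/k) \subset \Omega_\tau$, this gives
\begin{equation*}
B_{\Omega_\tau}\bigl(a(\tau);e^{-\varphi_\tau-\psi_{k,\tau}}\bigr) \leq B_{\BB^{2n}(a(\tau);1/k)}\bigl(a(\tau);e^{-\varphi_\tau-\psi_{k,\tau}}\bigr) = \sigma_{2n}k^{-2n}\,B_{\BB^{2n}(a(\tau);1/k)}\bigl(a(\tau);e^{-\varphi_\tau}\bigr),
\end{equation*}
and the right-hand side converges to $e^{\varphi(\tau,a(\tau))}$ by the second localization principle recalled above.

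For the lower bound, assumption (ii) supplies some $f \in A^2(\Omega_\tau; e^{-\varphi_\tau})$ with $f(a(\tau)) = 1$, and then
\begin{equation*}
B_{\Omega_\tau}\bigl(a(\tau);e^{-\varphi_\tau-\psi_{k,\tau}}\bigr) \geq \left(\int_{\Omega_\tau}|f|^2 e^{-\varphi_\tau-\psi_{k,\tau}}\, d\lambda\right)^{-1}.
\end{equation*}
I would show that the denominator tends to $e^{-\varphi(\tau,a(\tau))}$ by decomposing $\Omega_\tau$ into three pieces as in the proof of Lemma \ref{Lemma1}. On $\BB^{2n}(a(\tau);1/k)$, $\psi_{k,\tau}$ is constant, and continuity of $|f|^2 e^{-\varphi_\tau}$ together with $|f(a(\tau))|^2 = 1$ produces the main term $e^{-\varphi(\tau,a(\tau))}$. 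On the annulus $\{1/k < |z-a(\tau)| < R\}$, bounding $|f|^2 e^{-\varphi_\tau}$ by its supremum over the compact set $\overline{\BB^{2n}(a(\tau);R)}$ and passing to radial coordinates reduces the estimate to an integral of the shape $\int_1^{kR} s^{2n-1-k}\, ds$, which vanishes as $k \to +\infty$ for $k > 2n$. On the tail $\Omega_\tau \setminus \BB^{2n}(a(\tau);R)$, the uniform bound $e^{-\psi_{k,\tau}} \leq \sigma_{2n}^{-1} k^{2n} (kR)^{-k}$ combined with $\int_{\Omega_\tau} |f|^2 e^{-\varphi_\tau} d\lambda < +\infty$ forces the contribution to vanish.

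The main technical point is the tail estimate on $\Omega_\tau \setminus \BB^{2n}(a(\tau);R)$: one cannot simply take $f \equiv 1$ as a test function, since $e^{-\varphi_\tau}$ need not be integrable on $\Omega_\tau$. Assumption (ii) is invoked precisely to produce an admissible $f$ with finite $L^2$-norm, after which the super-exponential decay of $e^{-\psi_{k,\tau}}$ off the small ball absorbs the $L^2$-mass and the bookkeeping closely parallels the three-piece decomposition in Lemma \ref{Lemma1}.
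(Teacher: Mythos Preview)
Your proposal is correct and follows the same sandwich strategy as the paper: an upper bound via localization to the small ball where $\psi_{k,\tau}$ is constant, and a lower bound via a test function $f\in A^2(\Omega_\tau;e^{-\varphi_\tau})$ with $f(a(\tau))=1$, whose weighted norm is controlled by splitting $\Omega_\tau$ into near and far regions. The implementation details differ slightly---the paper obtains the upper bound by a direct mean-value estimate on $|f|^2$ rather than by domain monotonicity plus the quoted localization principle, and for the lower bound it replaces your three-piece split (ball of radius $1/k$, annulus, tail) by a two-piece split at the intermediate radius $\delta_k=k^{(2n+1)/k-1}$, chosen so that $k\delta_k\to1$ while $\psi_{k,\tau}\geq\log(\sigma_{2n}k)$ outside $\BB^{2n}(a(\tau);\delta_k)$---but these are interchangeable bookkeeping choices, and your annulus estimate $\int_1^{kR}s^{2n-1-k}\,ds\leq(k-2n)^{-1}$ is a perfectly valid substitute for the $\delta_k$ trick.
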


\begin{proof}
Since $(\tau,a(\tau))\in\Omega$, we may choose $r>0$ such that $\overline{\BB^{2n}(a(\tau);r)} \subset \Omega_\tau$.

Since $\varphi$ is upper semi-continuous, for any constant $c>\varphi(\tau,a(\tau))$, we can find $r_c\in(0,r)$ such that $\varphi(\tau,z)<c$ for all $z\in\BB^{2n}(a(\tau);r_c)$. Given $k>1/r_c$ and $f\in A^2(\Omega_\tau;e^{-\varphi_\tau-\psi_{k,\tau}})$, since $\psi_k(\tau,z) = \log(\sigma_{2n}k^{-2n})$ for $|z-a(\tau)|<1/k$, we have
\begin{align*}
\int_{\Omega_\tau} |f(z)|^2 e^{-\varphi(\tau,z) - \psi_k(\tau,z)} d\lambda_z
& \geq \frac{k^{2n}}{\sigma_{2n}} \int_{\BB^{2n}(a(\tau);1/k)} |f(z)|^2e^{-\varphi(\tau,z)} d\lambda_z \\
& \geq \frac{k^{2n}}{\sigma_{2n}} \int_{\BB^{2n}(a(\tau);1/k)} |f(z)|^2 e^{-c} d\lambda_z \\
& \geq  |f(a(\tau))|^2 e^{-c}.
\end{align*}
In the last inequality, we used the plurisubharmonicity of $|f|^2$. By the definition of Bergman kernels,
$$ B_{\Omega_\tau}(a(\tau);e^{-\varphi_\tau-\psi_{k,\tau}}) \leq e^c, \quad \forall k>1/r_c. $$
Since $c>\varphi(\tau,a(\tau))$ is arbitrary, we conclude that
\begin{equation} \label{Eq:UB}
\varlimsup_{k\to+\infty} B_{\Omega_\tau}(a(\tau);e^{-\varphi_\tau-\psi_{k,\tau}}) \leq e^{\varphi(\tau,a(\tau))}.
\end{equation}
	
\textbf{\itshape By condition (ii)}, $B_{\Omega_\tau}(a(\tau);e^{-\varphi_\tau})>0$, then there exists a holomorphic function $g\in A^2(\Omega_\tau;e^{-\varphi_\tau})$ with $g(a(\tau))=1$. In particular, $e^{-\varphi_\tau}$ is locally integrable near $a(\tau)\in\Omega_\tau$. For convenience, we set
\begin{equation}
    \delta_k := k^{\frac{2n+1}{k}-1} = (\sqrt[k]{k})^{2n+1}/k.
\end{equation}
Clearly, $\delta_k>1/k$ and $\lim_{k\to+\infty}k\delta_k=1$. For $k\gg1$ such that $\delta_k<r$, we write
\begin{align*}
\int_{\Omega_\tau} |g|^2 e^{-\varphi_\tau - \psi_{k,\tau}} d\lambda = \int_{\{|z-a(\tau)|<\delta_k\}}
+ \int_{\Omega_\tau\cap \{|z-a(\tau)|\geq\delta_k\}} =: \mathbf{I} + \mathbf{II}.
\end{align*}
Notice that, if $|z-a(\tau)|\geq\delta_k$, then
\begin{align*}
\psi_k(\tau,z) & = k\log(k|z-a(\tau)|) + \log(\sigma_{2n}k^{-2n}) \\
& \geq k\log(k^{\frac{2n+1}{k}}) + \log(\sigma_{2n}k^{-2n}) = \log(\sigma_{2n}k).
\end{align*}
Consequently,
$$ \mathbf{II} \leq \frac{1}{\sigma_{2n}k} \int_{\Omega_\tau\cap\{|z-a(\tau)|\geq\delta_k\}} |g(z)|^2 e^{-\varphi(\tau,z)} d\lambda_z. $$
Since $\int_{\Omega_\tau} |g|^2e^{-\varphi_\tau} d\lambda < +\infty$, it follows that $\lim_{k\to+\infty} \mathbf{II} = 0$. On the other hand, since $\psi_k\geq\log(\sigma_{2n}k^{-2n})$, we have
\begin{align*}
\mathbf{I} & \leq \frac{k^{2n}}{\sigma_{2n}} \int_{\{|z-a(\tau)|<\delta_k\}} |g(z)|^2e^{-\varphi(\tau,z)} d\lambda_z \\
& = (k\delta_k)^{2n} \frac{1}{\sigma_{2n}\delta_k^{2n}} \int_{\{|z-a(\tau)|<\delta_k\}} |g(z)|^2e^{-\varphi(\tau,z)} d\lambda_z.
\end{align*}
\textbf{\itshape By condition (i)}, $|g|^2e^{-\varphi_\tau}$ is continuous. Since $\lim_{k\to+\infty}\delta_k=0$ and $g(a(\tau))=1$, it follows that
\begin{equation*}
\lim_{k\to+\infty} \frac{1}{\sigma_{2n}\delta_k^{2n}} \int_{\{|z-a(\tau)|<\delta_k\}} |g(z)|^2e^{-\varphi(\tau,z)} d\lambda_z = e^{-\varphi(\tau,a(\tau))}.
\end{equation*}
As $\lim_{k\to+\infty} k\delta_k = 1$, we see that $\varlimsup_{k\to+\infty} \mathbf{I} \leq  e^{-\varphi(\tau,a(\tau))}$. In summary,
$$ \varlimsup_{k\to+\infty} \int_{\Omega_\tau} |g|^2 e^{-\varphi_\tau - \psi_{k,\tau}} d\lambda \leq e^{-\varphi(\tau,a(\tau))}. $$

By the definition of Bergman kernels,
$$ B_{\Omega_\tau}(a(\tau);e^{-\varphi_\tau-\psi_{k,\tau}}) \geq \frac{1}{\int_{\Omega_\tau}|g|^2e^{-\varphi_\tau-\psi_{k,\tau}}d\lambda}. $$
Consequently,
\begin{equation} \label{Eq:LB}
\varliminf_{k\to+\infty} B_{\Omega_\tau}(a(\tau);e^{-\varphi_\tau-\psi_{k,\tau}}) \geq e^{\varphi(\tau,a(\tau))}.
\end{equation}

Combining \eqref{Eq:UB} and \eqref{Eq:LB}, we complete the proof of the lemma.
\end{proof}

Now we are ready to prove the mean-value inequality \eqref{Eq:SubMeanC0}. Recall that,
\begin{equation} \label{Eq:ukDef}
	u_k(\tau) := \log B_{\Omega_\tau}(a(\tau);e^{-\varphi_\tau-\psi_{k,\tau}})
\end{equation}
are psh functions on $U$. For any compact subset $K$ of $U$, there exists some $\eps>0$ such that
$$ \tilde{K} := \big\{ (\tau,z)\in\CC^m\times\CC^n: \tau\in K, |z-a(\tau)|\leq\eps \big\} $$
is contained in $\Omega$. Since $\varphi$ is upper semi-continuous, the supremum $c:=\sup_{\tilde{K}}\varphi$ is finite. Similar to the proof of \eqref{Eq:UB}, we can show that $u_k\leq c$ on $K$ for all $k>1/\eps$. Therefore, $u_k$ are locally uniformly bounded from above on $U$.

According to Lemma \ref{Lemma2}, $\lim_{k\to+\infty} u_k(\tau) = \varphi(\tau,a(\tau))$ for every $\tau\in U$. By the construction, $a(\tau_0+w\eta)=z_0+w\xi$ for any $w\in\CC$. Since $u_k$ are psh and locally uniformly bounded from above on $U$, it follows from Fatou's lemma that
\begin{align*}
	\varphi(\tau_0,z_0) = \lim_{k\to+\infty} u_k(\tau_0) & \leq \varlimsup_{k\to+\infty} \frac{1}{2\pi} \int_0^{2\pi} u_k(\tau_0+e^{i\theta}\eta) d\theta \\
	& \leq \frac{1}{2\pi} \int_0^{2\pi} \lim_{k\to+\infty} u_k(\tau_0+e^{i\theta}\eta) d\theta \\
	& = \frac{1}{2\pi} \int_0^{2\pi} \varphi(\tau_0+e^{i\theta}\eta, z_0+e^{i\theta}\xi) d\theta.
\end{align*}
Therefore, we prove \eqref{Eq:SubMeanC0} in the case of $\eta\neq0$.

We remain to prove \eqref{Eq:SubMeanC0} in the case of $\eta=0$. We take a sequence $\{\eta_j\}_{j=1}^\infty$ in $\CC^m\setminus\{0\}$ such that $\eta_j\to0$ and $(\tau_0,z_0)+(\eta_j,\xi)\overline{\triangle} \subset\Omega$ for all $j$. By Fatou's lemma and the upper semi-continuity of $\varphi$,
\begin{align*}
	\varphi(\tau_0,z_0) & \leq \varlimsup_{j\to+\infty} \frac{1}{2\pi} \int_0^{2\pi} \varphi(\tau_0+e^{i\theta}\eta_j, z_0+e^{i\theta}\xi) d\theta \\
	& \leq \frac{1}{2\pi} \int_0^{2\pi} \varlimsup_{j\to+\infty} \varphi(\tau_0+e^{i\theta}\eta_j, z_0+e^{i\theta}\xi) d\theta \\
	& \leq \frac{1}{2\pi} \int_0^{2\pi} \varphi(\tau_0, z_0+e^{i\theta} \xi) d\theta.
\end{align*}
This completes the proof.

\subsection{The general case} \label{Sec:GeneralCase}

In the previous section, we proved Theorem \ref{Thm:ConvBerndt} under two additional assumptions. Here, we consider the general case.

Recall that conditions (i) and (ii) were used in the proof of Lemma \ref{Lemma2}. Without these two assumptions, the conclusion of Lemma \ref{Lemma2} may fail at certain points. Nevertheless, we will show that the exceptional set has zero measure.

\begin{lemma} \label{Lemma:ZeroMes1}
Given a sequence $\{\delta_k\}_{k=1}^\infty$ of positive numbers decreasing to $0$, there exists a set $S\subset\Omega$ of zero measure such that, for every $(\tau,z)\in\Omega\setminus S$, one has $B_{\Omega_\tau}(z;e^{-\varphi_\tau})>0$ and
\begin{equation} \label{Eq:LebPt}
\lim_{k\to+\infty} \frac{1}{\sigma_{2n}\delta_k^{2n}} \int_{\BB^{2n}(z;\delta_k)} e^{-\varphi(\tau,w)} d\lambda_w = e^{-\varphi(\tau,z)}.
\end{equation}
\end{lemma}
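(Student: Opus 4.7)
The plan is to build $S = S_1 \cup S_2$, where $S_1$ is the vanishing locus of $B$ and $S_2$ captures the failure of fiberwise Lebesgue differentiation, and to show each piece has Lebesgue measure zero in $\Omega$. For $S_1$, I would feed the trivial weight $\psi \equiv 0$ (which is psh and bounded below) into the hypothesis of Theorem \ref{Thm:ConvBerndt} to conclude that $u(\tau, z) := \log B_{\Omega_\tau}(z; e^{-\varphi_\tau})$ is psh on the connected domain $\Omega$ and, by the standing hypothesis that $B \not\equiv 0$, not identically $-\infty$. Its polar locus $S_1 := \{u = -\infty\} = \{B = 0\}$ is therefore pluripolar and in particular has $(2m+2n)$-Lebesgue measure zero in $\Omega$.

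For $S_2$, the crucial intermediate step is to promote positivity of $B$ into honest local integrability of the weight. Given $(\tau, z) \in \Omega \setminus S_1$, pick $g \in A^2(\Omega_\tau; e^{-\varphi_\tau})$ with $g(z) = 1$; continuity of $g$ yields $|g|^2 \geq 1/4$ on some ball $V \subset \Omega_\tau$ around $z$, so
$$ \int_V e^{-\varphi(\tau, w)} d\lambda_w \leq 4 \int_{\Omega_\tau} |g|^2 e^{-\varphi_\tau} d\lambda < +\infty, $$
which shows $e^{-\varphi_\tau} \in L^1_{\mathrm{loc}}$ at $z$. Applying Fubini to $\chi_{S_1}$, the set $T \subset p(\Omega)$ of $\tau$'s whose slice $S_{1,\tau}$ carries positive $2n$-measure is itself null. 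For $\tau \notin T$, $e^{-\varphi_\tau}$ is locally integrable off a $2n$-null subset of $\Omega_\tau$, and the classical Lebesgue differentiation theorem gives
$$ \lim_{r \to 0^+} \frac{1}{\sigma_{2n} r^{2n}} \int_{\BB^{2n}(z; r)} e^{-\varphi(\tau, w)} d\lambda_w = e^{-\varphi(\tau, z)} $$
at a.e. $z \in \Omega_\tau$; specializing this full limit to the sequence $\{\delta_k\}$ yields \eqref{Eq:LebPt}. Defining $S_2 \subset \Omega \setminus S_1$ as the Borel set where \eqref{Eq:LebPt} fails, Fubini shows $S_2$ has Lebesgue measure zero in $\Omega$, and $S := S_1 \cup S_2 \cup ((T \times \CC^n) \cap \Omega)$ satisfies the lemma.

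I expect the main obstacle to be precisely the bridging step just described. The pluripolarity of $\{B = 0\}$ is a purely complex-analytic output of the hypothesis, whereas Lebesgue differentiation is a real-variable statement that requires honest $L^1_{\mathrm{loc}}$-integrability of $e^{-\varphi_\tau}$ on each fiber, something the hypothesis does not supply directly; indeed the natural $L^2$-control $|g|^2 e^{-\varphi_\tau} \in L^1$ degenerates at zeros of $g$ and is too weak on its own. The extremal function $g$ with $g(z) = 1$ provides exactly the conversion needed on a small ball near the base point, without relying on any regularity or plurisubharmonicity of $\varphi$ itself, which is what Theorem \ref{Thm:ConvBerndt} is ultimately trying to establish.
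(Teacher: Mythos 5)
Your proof follows the paper's argument closely: use $\psi\equiv0$ to make $\log B_{\Omega_\tau}(z;e^{-\varphi_\tau})$ psh so that its polar set $\{B=0\}$ is null, slice by Fubini to control the bad fibers, recover local integrability of $e^{-\varphi_\tau}$ on the good fibers from an $L^2$ competitor $g$ (the paper excises $\{g=0\}$ using one global $g$ per fiber, you instead take a pointwise extremal $g$ with $g(z)=1$; both give $L^1_{\mathrm{loc}}$ off a null set), and then invoke the Lebesgue differentiation theorem. The only place where the paper is noticeably more careful is in pre-verifying the joint measurability of the exceptional set before applying Fubini --- it introduces the averaged functions $\varphi_k$ and their limsup/liminf envelopes $\varphi^*,\varphi_*$ as explicitly measurable functions on $\Omega$ and defines the bad set through them, whereas you simply declare $S_2$ Borel without argument; that bookkeeping is routine (since $\varphi$ is upper semi-continuous hence Borel and the averages are jointly measurable by Tonelli), so the proposal is essentially sound and equivalent to the paper's.
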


Lemma \ref{Lemma:ZeroMes1} is an improved version of the Lebesgue differentiation theorem. The tedious proof will be left to the next subsection.

In the following, we shall take $\delta_k := k^{\frac{2n+1}{k}-1}$ as in the proof of Lemma \ref{Lemma2}. We aim to prove the plurisubharmonicity of $\varphi$ at a given point $(\tau_0,z_0)\in\Omega\setminus S$.

We fix a vector $(\eta,\xi)\in\CC^m\times\CC^n$ with $\eta\neq0$ and $(\tau_0,z_0)+(\eta,\xi)\overline{\triangle} \subset \Omega$. We choose a matrix $A\in\mathrm{Mat}_\CC(n,m)$ such that $A\eta=\xi$. Let $a(\tau):=z_0+A(\tau-\tau_0)$ and $U := \{\tau\in\CC^m: (\tau,a(\tau)) \in \Omega \}$. We define lower-bounded psh functions
$$ \psi_k(\tau,z) := k\max\{\log(k|z-a(\tau)|),0\} + \log(\sigma_{2n}k^{-2n}) $$
as in \eqref{Eq:PshTwist}, then
$$ u_k(\tau) := \log B_{\Omega_\tau}(a(\tau);e^{-\varphi_\tau-\psi_{k,\tau}}) $$
are psh functions on $U$. By examining the proof of Lemma \ref{Lemma2}, we see that
\begin{itemize}
\item $u_k$ are locally uniformly bounded from above on $U$;
\item for any $\tau\in U$, one has $\varlimsup_{k\to+\infty}u_k(\tau)\leq\varphi(\tau,a(\tau))$;
\item for any $\tau\in U$ such that $(\tau,a(\tau))\notin S$, one has $\lim_{k\to+\infty}u_k(\tau)=\varphi(\tau,a(\tau))$.
\end{itemize}

Since $(\tau_0,z_0)\notin S$, $\tau_0+\eta\overline{\triangle} \subset U$, $u_k$ are psh and locally uniformly bounded from above on $U$, it follows from Fatou's lemma that
\begin{align*}
	\varphi(\tau_0,z_0) = \lim_{k\to+\infty} u_k(\tau_0) & \leq \varlimsup_{k\to+\infty} \frac{1}{2\pi} \int_0^{2\pi} u_k(\tau_0+e^{i\theta}\eta) d\theta \\
	& \leq \frac{1}{2\pi} \int_0^{2\pi} \varlimsup_{k\to+\infty} u_k(\tau_0+e^{i\theta}\eta) d\theta \\
        & \leq \frac{1}{2\pi} \int_0^{2\pi} \varphi(\tau_0+e^{i\theta}\eta,z_0+e^{i\theta}\xi) d\theta.
\end{align*}
Therefore, the mean-value inequality \eqref{Eq:SubMeanC0} holds for any affine analytic disc $(\tau_0,z_0) + (\eta,\xi)\overline{\triangle} \subset\Omega$ with $\eta\neq0$. Since $\varphi$ is upper semi-continuous, the same inequality holds for any affine analytic disc in $\Omega$ centered at $(\tau_0,z_0)$ (see the last paragraph of Section \ref{Sec:SpecialCase}). In conclusion, $\varphi$ is psh at $(\tau_0,z_0)\in\Omega\setminus S$.

We remain to prove the plurisubharmonicity at $(\tau_*,z_*)\in S$. Since $S\subset\Omega$ has zero measure, by the \textbf{\itshape strong} upper semi-continuity of $\varphi$, we can choose a sequence of points $(\tau_j,z_j)$ in $\Omega\setminus S$ such that
$$ \lim_{j\to+\infty} (\tau_j,z_j) = (\tau_*,z_*) \quad\text{and}\quad \lim_{j\to+\infty} \varphi(\tau_j,z_j) = \varphi(\tau_*,z_*). $$
Given any affine analytic disc $(\tau_*,z_*)+(\eta,\xi)\overline{\triangle} \subset \Omega$ centered at $(\tau_*,z_*)$, it follows from Fatou's lemma and the upper semi-continuity of $\varphi$ that
\begin{align*}
	\varphi(\tau_*,z_*) = \lim_{j\to+\infty} \varphi(\tau_j,z_j) & \leq
	\frac{1}{2\pi } \varlimsup_{j\to+\infty} \int_0^{2\pi} \varphi(\tau_j+e^{i\theta}\eta,z_j+e^{i\theta}\xi) d\theta \\
	& \leq \frac{1}{2\pi} \int_0^{2\pi} \varlimsup_{j\to+\infty} \varphi(\tau_j+e^{i\theta}\eta,z_j+e^{i\theta}\xi) d\theta \\
	& \leq \frac{1}{2\pi} \int_0^{2\pi} \varphi(\tau_*+e^{i\theta}\eta,z_*+e^{i\theta}\xi) d\theta.
\end{align*}
Therefore, $\varphi$ is also psh at $(\tau_*,z_*)\in S$. This completes the proof of Theorem \ref{Thm:ConvBerndt}.

\begin{remark} \label{Rmk:aeConvBerndt}
In the above proof of Theorem \ref{Thm:ConvBerndt}, the \textit{strong} upper semi-continuity of $\varphi$ is only used in the last paragraph. Therefore, if we assume only that $\varphi$ is upper semi-continuous, then the same argument shows the following: {\itshape there exists a set $S\subset\Omega$ of zero measure such that, for any affine analytic disc $(\tau_0,z_0) +(\eta,\xi)\overline{\triangle} \subset \Omega$ centered at $(\tau_0,z_0)\in\Omega\setminus S$, the mean-value inequality \eqref{Eq:SubMeanC0} holds.} By Lemma \ref{Lemma:aePSH} below, we can conclude that there exists a psh function $\tilde{\varphi}$ on $\Omega$ such that $\varphi=\tilde{\varphi}$ on $\Omega\setminus S$.
\end{remark}

\begin{lemma} \label{Lemma:aePSH}
Let $u$ be an upper semi-continuous function on a domain $D\subset\CC^N$ such that $u>-\infty$ almost everywhere. Assume there exists a set $S\subset D$ with zero measure such that
\begin{equation}\label{Eq:aePSH}
    u(z) \leq \frac{1}{2\pi} \int_0^{2\pi} u(z+e^{i\theta}\eta) d\theta
\end{equation}
for any affine analytic disc $z+\eta\overline{\triangle} \subset D$ centered at $z\in D\setminus S$. Then there exists a psh function $\tilde{u}$ on $D$ such that $u=\tilde{u}$ on $D\setminus S$.
\end{lemma}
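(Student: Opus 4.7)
My plan is to construct $\tilde{u}$ as the upper semi-continuous regularization of the pointwise limsup of standard mollifications of $u$.

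First I would show $u\in L^1_{\mathrm{loc}}(D)$. For $z\in D\setminus S$, integrating the given one-dimensional sub-mean inequality over complex directions yields the real spherical-mean bound $u(z)\leq M_u(r,z):=\frac{1}{|\Sp^{2N-1}|}\int_{\Sp^{2N-1}} u(z+r\xi)\,dS(\xi)$ for every $r$ with $\overline{\BB^{2N}(z;r)}\subset D$, and integrating in $r$ gives $u(z)\sigma_{2N}r^{2N}\leq\int_{\BB^{2N}(z;r)}u\,d\lambda$. Since $u>-\infty$ almost everywhere and $S$ is null, every $V\Subset D$ contains a point $z_0\in V\setminus S$ with $u(z_0)>-\infty$, which produces $u\in L^1$ on a ball about $z_0$; a covering argument together with the local boundedness of $u^+$ (by USC) yields $u\in L^1_{\mathrm{loc}}(D)$.

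Next I would form $u_\epsilon:=u*\rho_\epsilon$ on $D_\epsilon:=\{z\in D:d_D(z)>\epsilon\}$, where $\rho$ is smooth, non-negative, radially symmetric, supported in $\BB^{2N}$, with $\int\rho=1$. For any disc $z+\eta\overline{\triangle}\subset D_\epsilon$, Fubini gives
\[
u_\epsilon(z)-\tfrac{1}{2\pi}\int_0^{2\pi} u_\epsilon(z+e^{i\theta}\eta)\,d\theta=\int\rho_\epsilon(w)\Bigl(u(z-w)-\tfrac{1}{2\pi}\int_0^{2\pi} u(z-w+e^{i\theta}\eta)\,d\theta\Bigr)d\lambda_w,
\]
and the bracket is non-positive for a.e.\ $w$ because $z-w\in D\setminus S$ for a.e.\ $w$; hence $u_\epsilon$ is smooth plurisubharmonic on $D_\epsilon$. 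Writing $u_\epsilon(z)=\int_0^\infty\tilde{\rho}(s)s^{2N-1}|\Sp^{2N-1}|M_u(\epsilon s,z)\,ds$ in polar coordinates and applying $M_u(\epsilon s,z)\geq u(z)$ for $z\in D\setminus S$ gives $u_\epsilon(z)\geq u(z)$ there, while the USC of $u$ gives $\limsup_{\epsilon\to 0^+}u_\epsilon\leq u$ on all of $D$; in particular $\lim_{\epsilon\to 0^+}u_\epsilon(z)=u(z)$ on $D\setminus S$.

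Finally I would set $v(z):=\limsup_{\epsilon\to 0^+}u_\epsilon(z)$ and $\tilde{u}:=v^*$, its upper semi-continuous regularization. Then $v\leq u$ on $D$ with $v=u$ on $D\setminus S$, and the USC of $u$ forces $\tilde{u}=u$ on $D\setminus S$. To see that $\tilde{u}$ is plurisubharmonic, fix a disc $z+\eta\overline{\triangle}\subset D$ and a sequence $w_j\to z$ with $v(w_j)\to\tilde{u}(z)$; the sub-mean inequality $u_\epsilon(w_j)\leq\frac{1}{2\pi}\int_0^{2\pi} u_\epsilon(w_j+e^{i\theta}\eta)\,d\theta$ (valid for all small $\epsilon$), combined with two applications of reverse Fatou (justified because the $u_\epsilon$ are locally uniformly bounded above by the local bound on $u^+$), yields $\tilde{u}(z)\leq\frac{1}{2\pi}\int_0^{2\pi}\tilde{u}(z+e^{i\theta}\eta)\,d\theta$ after taking $\limsup$ in $\epsilon$ for fixed $j$ and then $j\to\infty$. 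The main obstacle is this final limsup exchange: without any monotonicity of $u_\epsilon$ in $\epsilon$, one cannot take a decreasing limit but must pass through the upper regularization, and the nullity of $S$ is used crucially both to transfer the sub-mean property from $u$ to each $u_\epsilon$ and to pin down the pointwise limit on $D\setminus S$.
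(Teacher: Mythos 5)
Your proof is correct, and it takes a genuinely different route at the final stage. Both you and the paper reduce the problem to showing that the smooth mollifications $u_\eps$ are plurisubharmonic on $D_\eps$ (same first two paragraphs, essentially verbatim, including the $L^1_{\mathrm{loc}}$ argument via spherical means and the Fubini argument that transfers the sub-mean inequality from $u$ to $u_\eps$). Where you diverge: the paper establishes monotonicity of the family $u_\eps$ in $\eps$, via the symmetry trick $u_\eps * \chi_\delta = u_\delta * \chi_\eps$ combined with the classical fact that mollification of a psh function by a radial kernel is monotone in the smoothing parameter; once $u_\delta \searrow$ pointwise, the decreasing limit $\tilde{u} = \lim_\delta u_\delta$ is automatically upper semi-continuous and psh, with no regularization needed. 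You instead bypass monotonicity entirely, set $v = \limsup_\eps u_\eps$, take $\tilde{u} = v^*$, and verify the sub-mean inequality for $v^*$ by choosing a maximizing sequence $w_j \to z$ for the upper regularization and applying reverse Fatou twice (once in $\eps$, once in $j$, both justified by local uniform upper bounds inherited from the USC of $u$). That $\tilde{u} = u$ on $D\setminus S$ follows in your setup from $v \leq u$, $v = u$ on $D\setminus S$, and the USC of $u$, which squeezes $v^*$. The paper's approach is cleaner in that the decreasing limit is immediately psh with no need to check USC or pass through $v^*$; yours is more elementary in that it avoids invoking the monotone-mollification lemma for psh functions and relies only on standard measure theory. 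Both are complete proofs.
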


\begin{proof}
Given $x\in D$, let $r:=d_D(x)/2$, then there exists a point $z\in D\setminus S$ with $u(z)>-\infty$ and $|z-x|<r$. Integrating \eqref{Eq:aePSH} with respect to $\eta\in\BB^{2N}(0;r)$, we see that
$$ -\infty < u(z) \leq \frac{1}{\sigma_{2N}r^{2N}} \int_{\BB^{2N}(z;r)} u d\lambda.  $$
As $u$ is locally bounded from above, this implies $u$ is integrable near $x$. Therefore, $u\in L_{\textup{loc}}^1(D)$.
We choose a non-negative function $\chi\in C_c^\infty(\CC^N)$ such that $\chi(z)$ depends only on $|z|$, $\chi(z)=0$ for $|z|\geq1$, and $\int_{\CC^N} \chi d\lambda=1$. For any $\eps>0$, we set $\chi_\eps(z):=\eps^{-2N}\chi(z/\eps)$. Then
$$ u_\eps(z) = (u*\chi_\eps)(z) := \int_{\CC^N} u(z-\eps\zeta)\chi(\zeta) d\lambda_\zeta $$
is a smooth function on $D_\eps := \{z\in D: d_D(z)>\eps\}$. For any $z\in D_\eps$ and $\eta\in\CC^N$ such that $z+\eta\overline{\triangle}\subset D_\eps$, since $z-\eps\zeta\notin S$ for almost every $\zeta\in\CC^N$, it is clear that
\begin{align*}
u_\eps(z) & \leq \int_{\CC^N} \left( \frac{1}{2\pi} \int_0^{2\pi} u(z-\eps\zeta+e^{i\theta}\eta) d\theta  \right) \chi(\zeta) d\lambda_\zeta \\
& = \frac{1}{2\pi} \int_0^{2\pi} \left( \int_{\CC^N} u(z-\eps\zeta+e^{i\theta}\eta) \chi(\zeta) d\lambda_\zeta \right) d\theta \\ & = \frac{1}{2\pi} \int_0^{2\pi} u_\eps(z+e^{i\theta}\eta) d\theta.
\end{align*}
Therefore, $u_\eps$ is a psh function on $D_\eps$. By a similar proof to \cite[Theorem 1.6.11]{HorBook}, $u_\eps*\chi_\delta$ is decreasing as $\delta\searrow0$. Since $u_\eps*\chi_\delta = u_\delta*\chi_\eps$ and $\lim_{\eps\to0} u_\delta*\chi_\eps = u_\delta$, letting $\eps\searrow0$, we conclude that $u_\delta$ is decreasing as $\delta\searrow0$. Since $u$ is upper semi-continuous, $\varlimsup_{\eps\to0} u_\eps(z) \leq u(z)$ for any $z\in D$. Moreover, by integrating \eqref{Eq:aePSH} with respect to $\eta$, we see that $u_\eps(z)\geq u(z)$ for any $z\in D\setminus S$. In summary, $\tilde{u} := \lim_{\eps\to0} u_\eps$ is a psh function on $D$ and $u=\tilde{u}$ on $D\setminus S$.
\end{proof}

\subsection{The exceptional set} \label{Sec:ExceptionalSets}

In this section, we prove Lemma \ref{Lemma:ZeroMes1} that appears in the proof of Theorem \ref{Thm:ConvBerndt}. The technical proof relies on the Lebesgue differentiation theorem and the Fubini theorem. Note that, before applying the Fubini theorem, we must first verify that the relevant sets are measurable.

Recall that, given a function $f\in L_{\textup{loc}}^1(\RR^N)$, any $x_0\in\RR^N$ satisfying
\begin{equation*}
    \lim_{\eps\to0} \frac{1}{\sigma_N\eps^N} \int_{\BB^N(x_0;\eps)} |f-f(x_0)| d\lambda = 0
\end{equation*}
is called a \textit{Lebesgue point} of $f$. The Lebesgue differentiation theorem says that almost every $x\in\RR^N$ is a Lebesgue point of $f\in L_{\textup{loc}}^1(\RR^N)$.

\begin{proof}[The proof of Lemma \ref{Lemma:ZeroMes1}]
Taking $\psi\equiv0$, the assumption of Theorem \ref{Thm:ConvBerndt} says $\Phi := \log B_{\Omega_\tau}(z;e^{-\varphi_\tau})$ is a psh function on $\Omega$. Since $\Phi\not\equiv-\infty$, the pluripolar set
$$ S_0 := \Phi^{-1}(-\infty) = \left\{ (\tau,z)\in\Omega: B_{\Omega_\tau}(z;e^{-\varphi_\tau})=0 \right\} $$
has zero measure in $\Omega$. We denote by $p$ the projection from $\CC^m\times\CC^n$ to $\CC^m$. By Fubini's theorem, the set
$$ P := \left\{ \tau\in p(\Omega): \int_{\Omega_\tau} \mathbb{I}_{S_0}(\tau,z) d\lambda_z>0 \right\} $$
has zero measure in $p(\Omega)$.

For any positive integer $k$, we define a measurable function $\varphi_k$ on $\Omega$ by
$$ e^{-\varphi_k(\tau,z)} := \frac{1}{\sigma_{2n}\delta_k^{2n}} \int_{\BB^{2n}(z;\delta_k)} \mathbb{I}_\Omega(\tau,w) e^{-\varphi(\tau,w)} d\lambda_w. $$
Clearly, the limits
\begin{equation*}
\varphi^*(\tau,z) := \varlimsup_{k\to+\infty} \varphi_k(\tau,z) \quad\text{and}\quad
\varphi_*(\tau,z) := \varliminf_{k\to+\infty} \varphi_k(\tau,z)
\end{equation*}
are measurable functions on $\Omega$. Then
$$ S_1 := \big\{ (\tau,z)\in\Omega: \varphi^*(\tau,z) \neq \varphi(\tau,z) \text{ or } \varphi_*(\tau,z) \neq \varphi(\tau,z) \big\} $$
is a \textit{measurable} set. We claim that $S_1$ has zero measure in $\Omega$, then Lemma \ref{Lemma:ZeroMes1} follows by setting $S:=S_0\cup S_1$.

Given $\tau\in p(\Omega)\setminus P$, since $B_{\Omega_\tau}(\cdot;e^{-\varphi_\tau})\not\equiv0$, there exists some nontrivial holomorphic function $g\in\calO(\Omega_\tau)$ such that
$$ \int_{\Omega_\tau} |g|^2e^{-\varphi_\tau} d\lambda < +\infty. $$
In particular, $e^{-\varphi_\tau}$ is locally integrable on $\Omega_\tau\setminus\{g=0\}$. By the Lebesgue differentiation theorem, for almost every $z\in \Omega_\tau\setminus\{g=0\}$, we have
$$ \lim_{\delta\to0} \frac{1}{\sigma_{2n}\delta^{2n}} \int_{\BB^{2n}(z;\delta)} \mathbb{I}_\Omega(\tau,w) e^{-\varphi(\tau,w)} d\lambda_w = e^{-\varphi(\tau,z)}. $$
Consequently,
$$ \varphi^*(\tau,z) = \varphi_*(\tau,z) = \lim_{k\to+\infty} \varphi_k(\tau,z) = \varphi(\tau,z). $$
for almost every $z\in \Omega_\tau\setminus\{g=0\}$. Therefore, whenever $\tau\in p(\Omega)\setminus P$, the slice $\{z\in\CC^n: (\tau,z)\in S_1\}$ has zero measure. By Fubini's theorem again, $S_1\subset\Omega$ has zero measure. This completes the proof of Lemma \ref{Lemma:ZeroMes1}.
\end{proof}

\subsection{Counterexamples} \label{Sec:Berndt-Ex}

In this section, we construct some counterexamples concerning Theorem \ref{Thm:ConvBerndt}. Firstly, the \textit{strong} upper semi-continuity of $\varphi$ in Theorem \ref{Thm:ConvBerndt} is necessary.

\begin{example}
We define an upper semi-continuous function $\varphi$ on $\Omega:=\BB^{2m}\times\BB^{2n} \subset \CC_\tau^m\times\CC_z^n$ by setting $\varphi(\tau,0)=|\tau|$ and $\varphi(\tau,z)=0$ for $z\neq0$. For any psh function $\psi$ on $\Omega$, Berndtsson's theorem implies that $\log B_{\Omega_\tau}(z;e^{-\varphi_\tau-\psi_\tau}) \equiv \log B_{\Omega_\tau}(z;e^{-\psi_\tau})$ is also psh on $\Omega$. However, $\varphi$ itself is not psh. Indeed, if we assume only that $\varphi$ is upper semi-continuous, we can still show that $\varphi$ equals a psh function almost everywhere (see Remark \ref{Rmk:aeConvBerndt}).
\end{example}

Moreover, Theorem \ref{Thm:ConvBerndt} is false without the twisted factor $e^{-\psi}$.

\begin{example} \label{Ex:ConvBerndt1}
Let $u(\tau)$ be a psh function on $U\subset\CC^m$ and $v(z)$ an upper semi-continuous function on $D\subset\CC^n$. We consider the product domain $\Omega:=U\times D$ and the function $\varphi(\tau,z):=u(\tau)+v(z)$. Clearly,
$$ \log B_{\Omega_\tau}(z;e^{-\varphi_\tau}) = u(\tau) + \log B_D(z;e^{-v}) $$
is a psh function on $\Omega$, but $\varphi$ is not necessarily psh, and $\Omega$ is not necessarily pseudoconvex.
\end{example}

In the following, we provide a more essential counterexample. Specifically, we construct a non-psh function $\varphi$ on $\CC^2$ such that, for any $\CC$-linear fibration $\CC^2\to\CC$, the associated fiberwise Bergman kernel is log-psh.

The idea is similar to that of Example \ref{Ex:ConvPrekopa3}: the function $\varphi$ we constructed is a slight perturbation of a radially symmetric strictly psh function $\varphi_0$. If $\varphi$ remains radially symmetric, then fibrations in any direction are equivalent. To simplify the computation, we also choose $\varphi$ such that the fiberwise Bergman spaces have dimension one. Stating with $\varphi_0(z,w) = \frac{3}{2}\log(1+|z|^2+|w|^2)$ on $\CC^2$, we have the following counterexample.

\begin{example}\label{Ex:ConvBerndt2}
Consider the radially symmetric function
\begin{equation}
    \varphi(z,w) := \frac{3}{2}\log\big( 1 + \big| |z|^2+|w|^2-\eps^2 \big| \big)
\end{equation}
on $\CC^2$, where $\eps\in(0,1)$ is a constant. Clearly, $\varphi$ is not psh. However, the fiberwise Bergman kernel $B_\CC(w;e^{-\varphi_z})$ is log-psh on $\CC^2$, where $\varphi_z:=\varphi(z,\cdot)$.
\end{example}

\begin{proof}
Let $z\in\CC$ be fixed for a moment. Clearly, every holomorphic function $f\in A^2(\CC;e^{-\varphi_z})$ admits a Taylor expansion $f(w)=\sum_{k=0}^\infty c_kw^k$ with compact convergence on $\CC$. Since $\varphi_z(w)$ depends only on $|w|$, it is clear that
$$ \int_\CC |f|^2e^{-\varphi_z} d\lambda = \sum_{k=0}^\infty |c_k|^2 \int_\CC |w^k|^2 e^{-\varphi_z(w)} d\lambda_w. $$
Notice that,
$$ \int_\CC |w^k|^2 e^{-\varphi_z(w)} d\lambda_w = 2\pi \int_1^\infty \frac{r^{2k+1}dr}{(1-\eps^2+|z|^2+r^2)^{3/2}} + O(1). $$
The above integral is convergent if and only if $k=0$. Therefore, the Bergman space $A^2(\CC;e^{-\varphi_z})$ contains only constant functions, and then
$$ B_\CC(w;e^{-\varphi_z}) = \frac{1}{\int_\CC e^{-\varphi_z}d\lambda} $$
is independent of $w\in\CC$. For convenience, we set $\Phi(z):=\log B_\CC(w;e^{-\varphi_z})$.

When $|z|\geq\eps$, we have
\begin{align*}
\int_\CC e^{-\varphi(z,w)} d\lambda_w & = 2\pi \int_0^\infty \frac{rdr}{(1-\eps^2+|z|^2+r^2)^{3/2}} \\
& = \frac{-2\pi}{\sqrt{1-\eps^2+|z|^2+r^2}} \big|_{r=0}^\infty = \frac{2\pi}{\sqrt{1-\eps^2+|z|^2}}.
\end{align*}
By a similar computation, for $|z|<\eps$, we have
\begin{align*}
	&~\int_\CC e^{-\varphi(z,w)} d\lambda_w = 2\pi \int_0^\infty \frac{rdr}{(1+||z|^2+r^2-\eps^2|)^{3/2}} = 4\pi - \frac{2\pi}{\sqrt{1+\eps^2-|z|^2}}.
\end{align*}
Since $\log(1+||z|^2+r^2-\eps^2|) \geq \log(1-\eps^2+|z|^2+r^2)$, it is clear that
$$ 4\pi - \frac{2\pi}{\sqrt{1+\eps^2-|z|^2}} \leq \frac{2\pi}{\sqrt{1-\eps^2+|z|^2}}. $$

In summary,
\begin{equation*}
	\Phi(z) = \begin{cases}
		\log\sqrt{1-\eps^2+|z|^2} - \log(2\pi), & |z|\geq\eps \\
		-\log\left(2-1/\sqrt{1+\eps^2-|z|^2}\right) - \log(2\pi), & |z|<\eps
	\end{cases}.
\end{equation*}
For convenience, we set
\begin{equation*}
	u(z):= \log\sqrt{1-\eps^2+|z|^2} \quad\text{and}\quad v(z):=-\log\left(2-1/\sqrt{1+\eps^2-|z|^2}\right).
\end{equation*}
Clearly, $u$ is psh on $\CC$. By a direct computation,
\begin{equation*}
	\frac{\pd^2v}{\pd z\pd\bar{z}} = \frac{(2+2\eps^2+|z|^2)\sqrt{1+\eps^2-|z|^2}-(1+\eps^2)} {2(1+\eps^2-|z|^2)^2(2\sqrt{1+\eps^2-|z|^2}-1)^2} > 0
\end{equation*}
in a neighbourhood of $\{|z|\leq\eps\}$. Hence, $\Phi$ is psh on $\CC\setminus\{|z|=\eps\}$. Given $z_0\in\CC$ with $|z_0|=\eps$, since $v\geq u$ on $\{|z|<\eps\}$ and $v=u=0$ on $\{|z|=\eps\}$, by checking the mean-value inequality for any circle centered at $z_0$, we see that $\Phi$ is also psh at $z_0$. Therefore, $\Phi$ is a psh function on $\CC$. This completes the proof.
\end{proof}

\section{The necessity of pseudoconvexity in Berndtsson's theorem} \label{Sec:PSC}

In this section, we prove Theorem \ref{Thm:CharPsc}, which concerns the necessity of pseudoconvexity in Berndtsson's theorem. Recall that a domain $\Omega\subset\CC^N$ is pseudoconvex if and only if $-\log d_\Omega$ is psh on $\Omega$, where
\begin{equation}
    d_\Omega(z) := \inf_{w\notin\Omega} |z-w|, \quad z\in\Omega,
\end{equation}
is the Euclidean distance to the boundary of $\Omega$. Therefore, to prove Theorem \ref{Thm:CharPsc}, we need to extract information about $d_\Omega$ from the weighted Bergman kernels. The starting point is the following counterpart to Lemma \ref{Lemma2}.

\begin{lemma} \label{Lemma3}
Let $D\ni0$ be a domain in $\CC_z^n$. Given a positive constant $r$, define $\phi_k:=k\max\{\log(|z|/r),0\}$, then
$$ \varliminf_{k\to+\infty} B_D(0;e^{-\phi_k}) \geq \frac{1}{|D\cap\BB^{2n}(0;r)|}. $$
Moreover, if $\BB^{2n}(0;r)\subset D$, then
$$ B_D(0;e^{-\phi_k}) \leq \frac{1}{\sigma_{2n}r^{2n}}, \quad \forall k>2n. $$
\end{lemma}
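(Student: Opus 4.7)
The plan is to prove the two assertions separately by elementary arguments: the lower bound by testing the Bergman extremal problem with the constant function $f\equiv 1$, and the upper bound by applying the submean-value property of $|f|^2$ on the inscribed ball $\BB^{2n}(0;r)\subset D$, where the weight $\phi_k$ vanishes identically.

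For the lower bound, I would exploit that $e^{-\phi_k}$ equals $1$ on $\BB^{2n}(0;r)$ and $(r/|z|)^k$ outside, so
\[
\|1\|_{L^2(D;e^{-\phi_k})}^2 = \left|D\cap\BB^{2n}(0;r)\right| + \int_{D\setminus\BB^{2n}(0;r)} (r/|z|)^k \, d\lambda.
\]
Passing to polar coordinates in $\CC^n\cong\RR^{2n}$, the exterior piece is controlled by
\[
2n\sigma_{2n} r^k \int_r^\infty \rho^{2n-1-k}\, d\rho = \frac{2n\sigma_{2n} r^{2n}}{k-2n},
\]
which is finite once $k>2n$ and vanishes as $k\to+\infty$. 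Combined with the extremal characterization $B_D(0;e^{-\phi_k})\geq |f(0)|^2/\|f\|^2$ applied to $f\equiv 1$, taking $\varliminf$ yields the stated lower bound.

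For the upper bound, assume $\BB^{2n}(0;r)\subset D$ and take any $f\in A^2(D;e^{-\phi_k})$ with $\|f\|\leq 1$. Since $|f|^2$ is psh on $D$ and $\phi_k\equiv 0$ on $\BB^{2n}(0;r)$, the submean-value inequality over this ball gives
\[
|f(0)|^2 \leq \frac{1}{\sigma_{2n} r^{2n}}\int_{\BB^{2n}(0;r)} |f|^2\, d\lambda = \frac{1}{\sigma_{2n} r^{2n}}\int_{\BB^{2n}(0;r)} |f|^2 e^{-\phi_k}\, d\lambda \leq \frac{1}{\sigma_{2n} r^{2n}}.
\]
Taking the supremum over admissible $f$ yields the claim.

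No genuine obstacle is anticipated; the only delicate point is that the polar-coordinate tail estimate in the lower bound requires $k>2n$ (to ensure convergence of $\int_r^\infty \rho^{2n-1-k}d\rho$), and this is exactly why the hypothesis appears in the companion upper-bound statement when $D$ is unbounded. A minor bookkeeping matter is to rescale $f\equiv 1$ by its norm when invoking the extremal characterization, but this is routine.
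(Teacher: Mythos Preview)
Your proof is correct and follows essentially the same approach as the paper: test with the constant function $g\equiv1$ for the lower bound (controlling the tail by the polar integral $\int_r^\infty \rho^{2n-1-k}\,d\rho$), and use the sub-mean-value inequality for $|f|^2$ on $\BB^{2n}(0;r)$ for the upper bound. One minor remark: your closing comment that the hypothesis $k>2n$ ``is exactly why'' it appears in the upper-bound statement is slightly off---the upper-bound argument via sub-mean-value works for every $k\geq0$, so the restriction $k>2n$ there is incidental (it is only genuinely needed for the lower bound).
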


\begin{proof}
We consider the constant function $g\equiv1$, then
$$ \int_D |g|^2e^{-\phi_k} d\lambda = |D\cap\BB^{2n}(0;r)| + \int_{D\cap\{|z|\geq r\}} e^{-\phi_k} d\lambda, $$
where $|D\cap\BB^{2n}(0;r)|$ denotes the measure of  $D\cap\BB^{2n}(0;r)$. When $k>2n$, we have
$$ \int_{D\cap\{|z|\geq r\}} e^{-\phi_k} d\lambda \leq \int_{\{|z|\geq r\}} e^{-k\log(|z|/r)} d\lambda = \frac{2n}{k-2n}\sigma_{2n} r^{2n}. $$
Therefore, $g\in A^2(D;e^{-\phi_k})$ for $k>2n$ and
$$ \lim_{k\to+\infty} \int_D |g|^2e^{-\phi_k} d\lambda = |D\cap\BB^{2n}(0;r)|. $$
As a consequence,
$$ \varliminf_{k\to+\infty} B_D(0;e^{-\phi_k}) \geq \lim_{k\to+\infty} \frac{1}{\int_D |g|^2e^{-\phi_k} d\lambda} = \frac{1}{|D\cap\BB^{2n}(0;r)|}. $$

Next, we assume that $\BB^{2n}(0;r)\subset D$. For any $f\in A^2(D;e^{-\phi_k})$, we have
$$ \int_D |f|^2e^{-\phi_k} d\lambda \geq \int_{\BB^{2n}(0;r)} |f|^2 d\lambda \geq \sigma_{2n}r^{2n}|f(0)|^2. $$
Consequently,
$$ B_D(0;e^{-\phi_k}) = \sup_{f\not\equiv0} \frac{|f(0)|^2}{\int_D|f|^2e^{-\phi_k}d\lambda} \leq \frac{1}{\sigma_{2n}r^{2n}}. $$
This completes the proof.
\end{proof}

Using Lemma \ref{Lemma3}, we can prove the following interesting result.

\begin{theorem} \label{Thm:delta-dist}
Let $\Omega$ be a domain in $\CC_\tau^{m}\times\CC_z^n$ so that $\overline{\Omega_\tau}^\circ=\Omega_\tau$ for all $\tau\in p(\Omega)$. Assume that for any non-negative psh function $\psi$ on $\Omega$, the function
$$ (\tau,z) \mapsto \log B_{\Omega_\tau}(z;e^{-\psi_\tau}) $$
is either psh or identically $-\infty$ on $\Omega$. Then $-\log\delta_\Omega$ is a psh function on $\Omega$, where
\begin{equation} \label{Eq:delta-dist}
\delta_\Omega(\tau,z) := d_{\Omega_\tau}(z) =  \inf_{w\notin\Omega_\tau} |z-w|, \quad (\tau,z)\in\Omega.
\end{equation}
\end{theorem}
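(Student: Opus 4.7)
My plan is to verify the plurisubharmonicity of $-\log\delta_\Omega$ by checking the sub-mean value inequality on every affine analytic disc in $\Omega$, which suffices since $\delta_\Omega$ is continuous and positive on $\Omega$ (so $-\log\delta_\Omega$ is upper semi-continuous). I will handle a tilted disc $(\tau_0,z_0)+(\eta,\xi)\overline{\triangle}\subset\Omega$ with $\eta\neq0$ first; the vertical case $\eta=0$ then follows by approximation with tilted discs $\eta_j\to0$, dominated convergence, and the continuity of $\delta_\Omega$.

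For the tilted disc, I will choose $A\in\mathrm{Mat}_\CC(n,m)$ with $A\eta=\xi$ and set $a(\tau):=z_0+A(\tau-\tau_0)$, so that the disc lies on the graph of $a$. For $r>0$ and $k>2n$ set
$$\psi_{k,r}(\tau,z):=k\max\{\log(|z-a(\tau)|/r),\,0\},$$
a non-negative psh function on $\CC^m\times\CC^n$, hence on $\Omega$. By the hypothesis, $u_{k,r}(\tau,z):=\log B_{\Omega_\tau}(z;e^{-\psi_{k,r,\tau}})$ is psh on $\Omega$ (it is not $\equiv-\infty$: applying Lemma \ref{Lemma3} to $f\equiv1$ shows that $B_{\Omega_\tau}(a(\tau);e^{-\psi_{k,r,\tau}})>0$ whenever $(\tau,a(\tau))\in\Omega$). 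Restricting to the disc yields a subharmonic function $h_{k,r}(w):=u_{k,r}(\tau_0+w\eta,z_0+w\xi)$ on $\overline{\triangle}$.

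Writing $\rho(w):=\delta_\Omega(\tau_0+w\eta,z_0+w\xi)$ and translating each fibre so that $a(\tau_0+w\eta)$ is the origin, Lemma \ref{Lemma3} together with the monotonicity of weighted Bergman kernels in $r$ (a larger $r$ makes $\psi_{k,r}$ smaller, hence the Bergman kernel smaller) gives, for all $k>2n$:
$$h_{k,r}(w)\leq-\log\big(\sigma_{2n}\min\{r,\rho(w)\}^{2n}\big),\qquad \liminf_{k\to\infty}h_{k,r}(0)\geq-\log|\Omega_{\tau_0}\cap\BB^{2n}(z_0;r)|.$$
Setting $r=\rho(0)$, the two bounds match at the centre because $\BB^{2n}(z_0;\rho(0))\subset\Omega_{\tau_0}$ implies $|\Omega_{\tau_0}\cap\BB^{2n}(z_0;\rho(0))|=\sigma_{2n}\rho(0)^{2n}$, so $\lim_{k\to\infty}h_{k,\rho(0)}(0)=-\log(\sigma_{2n}\rho(0)^{2n})$. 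I then apply the sub-mean value inequality for the subharmonic $h_{k,\rho(0)}$, let $k\to\infty$, and use reverse Fatou (the uniform upper bound is integrable in $\theta$ since $\rho$ is continuous and positive on the compact circle).

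The main obstacle is the final extraction: the naive limit yields only the weaker estimate $\log\rho(0)\geq\frac{1}{2\pi}\int_0^{2\pi}\log\min\{\rho(0),\rho(e^{i\theta})\}\,d\theta$, which is trivially true. Upgrading this to the genuine sub-mean $\log\rho(0)\geq\frac{1}{2\pi}\int_0^{2\pi}\log\rho(e^{i\theta})\,d\theta$ is where the regularity assumption $\overline{\Omega_\tau}^\circ=\Omega_\tau$ enters decisively. The plan is to run the argument with $r=\rho(0)+\varepsilon$ and exploit $|\Omega_{\tau_0}\cap\BB^{2n}(z_0;r)|\to\sigma_{2n}\rho(0)^{2n}$ as $\varepsilon\searrow0^+$ (a consequence of the regularity, which forces $|\partial\Omega_\tau|=0$), combined with a perturbation of the affine map $a$ by replacing $A\eta=\xi$ with $A_s\eta=\xi+s$ for small $s\in\CC^n$ to break the symmetry responsible for the trivial inequality. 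Carefully combining the resulting family of estimates, together with the monotonicity of $r\mapsto|\Omega_{\tau_0}\cap\BB^{2n}(z_0;r)|$ forced by the same regularity, should yield the desired sub-mean for $\log\rho$, completing the proof.
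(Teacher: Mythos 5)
Your setup and your use of Lemma \ref{Lemma3} are sound, and you correctly diagnose the obstruction yourself: a weight with a \emph{constant} localization radius $r$ can only produce the trivial inequality $\log\rho(0)\geq\frac{1}{2\pi}\int_0^{2\pi}\log\min\{\rho(0),\rho(e^{i\theta})\}\,d\theta$. The repair you sketch does not close this gap. Taking $r=\rho(0)+\eps$ merely weakens the center lower bound to $-\log|\Omega_{\tau_0}\cap\BB^{2n}(z_0;r)|$, which returns to $-\log(\sigma_{2n}\rho(0)^{2n})$ as $\eps\searrow0$ simply because $\sigma_{2n}\rho(0)^{2n}\leq|\Omega_{\tau_0}\cap\BB^{2n}(z_0;r)|\leq\sigma_{2n}r^{2n}$; this needs neither regularity nor $|\pd\Omega_\tau|=0$ (and in fact regularity does \emph{not} imply $|\pd\Omega_\tau|=0$). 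Replacing $A$ by $A_s$ with $A_s\eta=\xi+s$ only translates the point of each fiber that the weighted kernel probes, from $z_0+w\xi$ to $z_0+w(\xi+s)$; for each fixed $s$ you are again working with a constant radius, and the same exact cancellation at the center occurs in the limit $s\to0$. No family of constant-radius tests can see the genuine sub-mean inequality, because the upper bound on $\pd\triangle$ and the lower bound at $w=0$ always balance at precisely $r=\rho(0)$. (A minor point: $\delta_\Omega$ is only lower semi-continuous, not continuous; this is still enough to make $-\log\delta_\Omega$ upper semi-continuous.)

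The missing idea is to let the localization radius vary \emph{pluriharmonically} along the disc, and to replace the raw sub-mean value inequality by a harmonic-majorant (Hartogs-type) argument. Choose a continuous $\phi$ on $\Sp^1$ with $\phi(e^{i\theta})>\Phi(\tau_0+e^{i\theta}\eta,z_0+e^{i\theta}\xi)$, approximate it in sup-norm by $\Re f$ for a holomorphic polynomial $f$ (Stone--Weierstrass), extend to a holomorphic $g$ on $\CC^m$ with $g(\tau_0+w\eta)=f(w)$, and use the non-negative psh weight $\psi_k(\tau,z)=k\max\{\log|z-\gamma(\tau)|+\Re g(\tau),0\}$, which localizes near $\gamma(\tau)$ at the variable scale $e^{-\Re g(\tau)}$. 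On $\pd\triangle$ one has $e^{-\Re g}<\delta_\Omega$, so the upper bound of Lemma \ref{Lemma3} gives $u_k(w)\leq 2n\Re f(w)-\log\sigma_{2n}$; since $\Re f$ is harmonic, the maximum principle then yields $u_k(0)\leq 2n\Re f(0)-\log\sigma_{2n}$ for all $k$. Regularity of $\Omega_{\tau_0}$ is needed only at the center: if $\delta_\Omega(\tau_0,z_0)<e^{-\Re f(0)}$, regularity forces $|\Omega_{\tau_0}\cap\BB^{2n}(z_0;e^{-\Re f(0)})|<\sigma_{2n}e^{-2n\Re f(0)}$, contradicting the lower bound of Lemma \ref{Lemma3} as $k\to\infty$. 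Letting $\eps\searrow0$ and $\phi\searrow\Phi|_{\pd\triangle}$ then gives exactly the sub-mean inequality for $\Phi=-\log\delta_\Omega$.
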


Here, $\delta_\Omega$ is the distance function ``in the fiber direction". If $\delta_\Omega(\tau,z)>c>0$, then $\overline{\BB^{2n}(z;c)} \subset \Omega_\tau$, and we can find $0<\eps\ll1$ such that
$$ \overline{\BB^{2m}(\tau;\eps)} \times \overline{\BB^{2n}(z;c+\eps)} \subset \Omega. $$
It follows that $\delta_\Omega(\tau',z') > c$ for any $(\tau',z') \in \BB^{2m}(\tau;\eps)\times\BB^{2n}(z;\eps)$. Therefore, $\delta_\Omega>0$ is a \textit{lower semi-continuous} function on $\Omega$. Consequently, for any compact subset $K$ of $\Omega$, we have $\delta_\Omega(K) := \inf_{K} \delta_\Omega>0$.

\begin{proof}
To prove the plurisubharmonicity of $\Phi:=-\log\delta_\Omega$, for any affine analytic disc $(\tau_0,z_0)+(\eta,\xi)\overline{\triangle}\subset\Omega$, we need to verify the mean-value inequality
\begin{equation} \label{Eq:SubMeanPhi}
	\Phi(\tau_0,z_0) \leq \frac{1}{2\pi} \int_0^{2\pi} \Phi(\tau_0+e^{i\theta}\eta, z_0+e^{i\theta}\xi) d\theta.
\end{equation}
By a standard limit argument, it suffices to consider the case where $\eta\neq0$.

We take a continuous function $\phi(e^{i\theta})$ on $\Sp^1$ such that
$$ \phi(e^{i\theta}) > \Phi(\tau_0+e^{i\theta}\eta, z_0+e^{i\theta}\xi), \quad \forall\theta\in[0,2\pi]. $$
Given a constant $\eps>0$, by the Stone-Weierstrass theorem, there exists a holomorphic polynomial $f$ on $\CC$ such that $\phi<\operatorname{Re}f<\phi+\eps$ on $\Sp^1$.

As $\eta\neq0$, we can find a holomorphic map $\gamma:\CC^m\to\CC^n$ such that $\gamma(\tau_0+w\eta)=z_0+w\xi$, and a holomorphic function $g:\CC^m\to\CC$ such that $g(\tau_0+w\eta)=f(w)$ for all $w\in\CC$. Note that $\Re g$ is pluriharmonic. We consider the following sequence of non-negative psh functions on $\Omega$,
\begin{equation}
	\psi_k(\tau,z) := k\max\big\{\log|z-\gamma(\tau)|+\Re g(\tau),0\big\}, \quad k\geq2n+1.
\end{equation}
By assumptions,
$$ \Psi_k(\tau,z) := \log B_{\Omega_\tau}(z;e^{-\psi_{k,\tau}}) $$
are psh functions on $\Omega$. In particular,
$$ u_k(w) := \Psi_k(\tau_0+w\eta, z_0+w\xi) $$
are subharmonic functions in a neighborhood of $\overline{\triangle}$.

Given $w_*\in\pd\triangle$, we denote $(\tau_*,z_*):=(\tau_0+w_*\eta,z_0+w_*\xi)$. Since
$$ -\log\delta_\Omega(\tau_*,z_*) = \Phi(\tau_*,z_*) < \phi(w_*) < \Re f(w_*) = \Re g(\tau_*), $$
we know
$$ \BB^{2n}(z_*;e^{-\Re g(\tau_*)}) \subset \Omega_{\tau_*} \subset \CC^n. $$
According to Lemma \ref{Lemma3},
$$  B_{\Omega_{\tau_*}}(z_*;e^{-\psi_{k,\tau_*}}) \leq \frac{1}{\sigma_{2n}e^{-2n\Re g(\tau_*)}}. $$
Consequently,
$$ u_k(w_*) \leq 2n\Re f(w_*)-\log\sigma_{2n}, \quad \forall w_*\in\pd\triangle. $$
Since $u_k$ is subharmonic and $\Re f$ is harmonic, it follows that
$$ u_k(0) \leq 2n\Re f(0)-\log\sigma_{2n}, \quad \forall k\geq 2n+1. $$

We claim that $\delta_\Omega(\tau_0,z_0) \geq e^{-\Re f(0)}$. Otherwise, there exists a point $z_*\notin\Omega_{\tau_0}$ with $|z_*-z_0|<e^{-\Re f(0)}$. Since $\overline{\Omega_{\tau_0}}^\circ = \Omega_{\tau_0}$, it follows that $|\BB^{2n}(z_*;s)\setminus\Omega_{\tau_0}|>0$ for any $s>0$, and then
$$ |\Omega_{\tau_0} \cap \BB^{2n}(z_0;e^{-\Re f(0)})| < \sigma_{2n}e^{-2n\Re f(0)}. $$
According to Lemma \ref{Lemma3},
$$ \varliminf_{k\to+\infty} u_k(0) = \varliminf_{k\to+\infty} \log B_{\Omega_{\tau_0}}(z_0;e^{-\psi_{k,\tau_0}}) > 2n\Re f(0) -\log\sigma_{2n}. $$
This leads to a contradiction! Therefore, $\delta_\Omega(\tau_0,z_0) \geq e^{-\Re f(0)}$, and then
\begin{equation*}
	\Phi(\tau_0,z_0) \leq \Re f(0) = \frac{1}{2\pi} \int_0^{2\pi} \Re f(e^{i\theta}) d\theta < \frac{1}{2\pi} \int_0^{2\pi} \phi(e^{i\theta}) d\theta + \eps.
\end{equation*}
Letting $\eps\searrow0$ and $\phi(e^{i\theta}) \searrow \Phi(\tau_0+e^{i\theta}\eta, z_0+e^{i\theta}\xi)$, we complete the proof.
\end{proof}

We now show that in the case where $m=1$, the plurisubharmonicity of $-\log\delta_\Omega$ implies the plurisubharmonicity of $-\log d_\Omega$, and thus the pseudoconvexity of $\Omega$. This will complete the proof of Theorem \ref{Thm:CharPsc}. To this end, we need the following definition:

A \textit{closed analytic disc} in $\CC^N$ is a non-constant holomorphic map $\bd:\triangle\to\CC^N$ that extends continuously to $\overline{\triangle}$. For simplicity, we intentionally confuse $\bd$ with its image and refer to $\pd\bd := \bd(\pd\triangle)$ as the boundary of $\bd$.

\begin{theorem}
Let $\Omega$ be a domain in $\CC_\tau\times\CC_z^n$ and we define $\delta_\Omega(\tau,z)$ by \eqref{Eq:delta-dist}. If $-\log\delta_\Omega$ is a psh function on $\Omega$, then $-\log d_\Omega$ is also psh.
\end{theorem}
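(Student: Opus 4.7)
My plan is to prove plurisubharmonicity of $-\log d_\Omega$ (equivalently, pseudoconvexity of $\Omega$) via a Kontinuit\"atssatz / continuity-method argument applied to a one-parameter family of translated analytic discs, using plurisubharmonicity of $-\log\delta_\Omega$ as the essential obstruction to boundary contact.

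First, I would reduce to an affine-disc statement. It suffices to show that for every affine analytic disc $\ell(\zeta) := (\tau_0 + \eta\zeta,\, z_0 + \xi\zeta)$ with $\ell(\overline{\triangle}) \subset \Omega$, and every holomorphic polynomial $f$ on $\CC$ satisfying $\Re f(e^{i\theta}) > -\log d_\Omega(\ell(e^{i\theta}))$ for all $\theta$, one has $\Re f(0) \geq -\log d_\Omega(\ell(0))$. This is equivalent to $\ell(0) + e^{-f(0)}v \in \Omega$ for every $v \in \CC \times \CC^n$ with $|v| \leq 1$; in turn this would follow if the family of translated discs
\[
\ell_v(\zeta) := \ell(\zeta) + e^{-f(\zeta)}v, \qquad |v| \leq 1,
\]
satisfies $\ell_v(\overline{\triangle}) \subset \Omega$. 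The inclusion $\ell_v(\partial\triangle) \subset \Omega$ is automatic, since $|\ell_v(\zeta) - \ell(\zeta)| \leq e^{-\Re f(\zeta)} < d_\Omega(\ell(\zeta))$ on $\partial\triangle$.

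Next, I would run the continuity method on $V := \{v \in \overline{\BB^{2(n+1)}(0;1)} : \ell_v(\overline{\triangle}) \subset \Omega\}$. Then $0 \in V$ and $V$ is open, because $\ell_v(\overline{\triangle})$ is compact inside the open set $\Omega$. By connectedness of the closed unit ball, it would remain only to show $V$ is closed. Suppose $v_k \in V$ with $v_k \to v$ and, for contradiction, $v \notin V$: then there is $\zeta_* \in \triangle$ (an interior point, since $\ell_v(\partial\triangle)\subset\Omega$) with $\ell_v(\zeta_*) \in \partial\Omega$. The functions $u_k(\zeta) := -\log\delta_\Omega(\ell_{v_k}(\zeta))$ are subharmonic on $\triangle$, continuous up to $\partial\triangle$, and uniformly bounded above on $\partial\triangle$, since the images $\ell_{v_k}(\partial\triangle)$ eventually lie in a common compact subset of $\Omega$ on which $\delta_\Omega$ is bounded below. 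By the Poisson representation, $u_k(\zeta_*) \leq C$ independently of $k$.

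The main obstacle is then to force the desired contradiction by establishing $u_k(\zeta_*) \to +\infty$. Writing $(\tau_*, z_*) := \ell_v(\zeta_*) \in \partial\Omega$, in the ``vertical'' case $\tau_* \in p(\Omega)$ with $z_* \in \partial\Omega_{\tau_*}$, the fiber continuity implicit in the joint plurisubharmonicity of $-\log\delta_\Omega$ would give $\delta_\Omega(\ell_{v_k}(\zeta_*)) \to 0$ directly. The delicate ``horizontal'' case $\tau_* \in \partial p(\Omega)$ is what I would expect to be the technical heart: here the fiber-distance need not vanish at $(\tau_*, z_*)$, and I would exploit that $m = 1$, so $p(\Omega) \subset \CC$ is automatically a pseudoconvex subset of $\CC$, by adding the auxiliary psh function $-\log d_{p(\Omega)}(\tau)$ to each $u_k$ and using this enlarged subharmonic quantity to force blow-up at horizontal approaches. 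Once this is carried out, $V = \overline{\BB^{2(n+1)}(0;1)}$; the sub-mean-value inequality for $-\log d_\Omega$ then follows at the center of every affine analytic disc, and hence $-\log d_\Omega$ is plurisubharmonic.
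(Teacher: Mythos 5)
Your plan takes a genuinely different route from the paper. You reformulate the Hartogs condition as a Kontinuit\"atssatz for the family of translated discs $\ell_v(\zeta)=\ell(\zeta)+e^{-f(\zeta)}v$, $|v|\leq 1$, and then try to prove closedness of the parameter set $V$ by contradiction, bounding $u_k(\zeta_*)=-\log\delta_\Omega(\ell_{v_k}(\zeta_*))$ from above by the maximum principle and hoping to force a blow-up. The paper instead works directly with a closed analytic disc $\bd=(f,g)$ with $f$ non-constant, picks a unit vector $(\eta,\xi)$ realizing $d_\Omega(\bd)$, shifts $\bd$ horizontally, and applies the maximum principle for $-\log\delta_\Omega$ to the shifted disc; both approaches use $-\log\delta_\Omega$ via the maximum principle on a disc, but yours routes everything through a single contradiction argument.

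The contradiction step is where your proposal has a genuine gap. You need $u_k(\zeta_*)\to+\infty$, i.e.\ $\delta_\Omega(\ell_{v_k}(\zeta_*))\to0$, but this is not a consequence of $\ell_{v_k}(\zeta_*)\to(\tau_*,z_*)\in\pd\Omega$. The fiber distance $\delta_\Omega$ need not shrink when approaching the Euclidean boundary: the fiber $\Omega_{\tau_k}$ can contain a ball $\BB^{2n}(z_k;c)$ of fixed radius $c>0$ even though $z_*\notin\Omega_{\tau_*}$, and indeed $z_*$ need not even lie in $\overline{\Omega_{\tau_*}}$ (a case your dichotomy does not cover). Your appeal to ``fiber continuity implicit in the joint plurisubharmonicity'' is not a proof; lower semicontinuity of $\delta_\Omega$ goes the wrong way, and upgrading it to the needed upper estimate at boundary points is essentially equivalent to the theorem itself. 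The auxiliary term $-\log d_{p(\Omega)}(\tau)$ handles horizontal escape but does nothing in the vertical cases. What the paper's proof supplies, and what is missing here, is the \emph{alignment trick}: because the first coordinate $f$ of the disc is non-constant, the open mapping theorem produces $w_\eps$ near $w_0$ with $f(w_\eps)+(1-\eps)\eta$ exactly equal to the $\tau$-coordinate $\hat\tau$ of the offending exterior point, so that at $w_\eps$ the shifted disc sits in the \emph{same fiber} $\Omega_{\hat\tau}$ as the obstruction and $\delta_\Omega$ is bounded above by the genuine fiber distance $|g(w_\eps)-g(w_0)|\to0$. Your sequence $\ell_{v_k}(\zeta_*)$ only approaches $\tau_*$ but never sits in the fiber $\Omega_{\tau_*}$, so a Euclidean boundary approach cannot be converted into a fiber-distance blow-up, and the continuity method as stated does not close.
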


\begin{proof}
According to Hartogs, to prove the plurisubharmonicity of $-\log d_\Omega$, it is sufficient to show that $d_\Omega(\bd)=d_\Omega(\pd\bd)$ for any closed analytic disc
$$ \bd: w\in\overline{\triangle} \mapsto (f(w),g(w))\in\CC\times\CC^n $$
contained in $\Omega$ (see \cite[Theorem 3.3.5, (7) $\Rightarrow$ (1)]{KrantzBook}). Actually, we may assume that $f(w)$ is non-constant, $f$ and $g$ are holomorphic in a neighborhood of $\overline{\triangle}$.\footnote{According to Hartogs' proof, to verify the mean-value inequality for $-\log d_\Omega$ on $(\tau_0,z_0)+(\eta,\xi)\overline{\triangle}\subset\Omega$, it suffices to show that $d_\Omega(\bd)=d_\Omega(\pd\bd)$ for closed analytic discs $\bd\subset\Omega$ of the form
$$ \bd: w\in\overline{\triangle} \mapsto (\tau_0+w\eta+\alpha e^{-p(w)},z_0+w\xi+\beta e^{-p(w)}), $$
where $\alpha\in\CC$, $\beta\in\CC^n$, and $p(w)$ is a holomorphic polynomial on $\CC$. By a standard limit argument, it suffices to consider the case where $\eta\neq0$, and then $w\mapsto \tau_0+w\eta+\alpha e^{-p(w)}$ is non-constant.}

Clearly, $d_\Omega(\bd) \leq d_\Omega(\pd\bd)$. Therefore, we remain to prove the reverse inequality. Assume that the minimal distance
$$ r := d_\Omega(\bd) = \min_{w\in\overline{\triangle}} d_\Omega(f(w),g(w)) > 0 $$
is obtained at $w_0\in\overline{\triangle}$. If $w_0\in\pd\triangle$, we done. Hence, we assume that $w_0\in\triangle$. By definition, there exists a vector $(\eta,\xi)\in\CC\times\CC^n$ of length $r$ such that
$$ (f(w_0)+\eta,g(w_0)+\xi) \notin\Omega. $$
We divide the proof into two cases.

\textbf{Case 1:} $\xi\neq0$. In this case, we consider the closed analytic disc
$$ \bd_\eta: w\in\overline{\triangle} \mapsto (f(w)+\eta, g(w)) \in \CC\times\CC^n, $$
which is the horizontal shift of $\bd$ by $\eta$. Since $|\eta|<r=d_\Omega(\bd)$, we know $\bd_\eta\subset\Omega$. Since $-\log\delta_\Omega$ is psh on $\Omega$, it follows that
$$ w\mapsto-\log\delta_\Omega(f(w)+\eta,g(w)) $$
is a subharmonic function in a neighborhood of $\overline{\triangle}$. By the maximum principle of subharmonic functions, we have
$$ \delta_\Omega(\pd\bd_\eta) = \delta_\Omega(\bd_\eta). $$
As $(f(w_0)+\eta,g(w_0)+\xi) \notin\Omega$, we know $\delta_\Omega(\bd_\eta) \leq |\xi|$, and then
$$ \delta_\Omega(\pd\bd_\eta) = \delta_\Omega(\bd_\eta) \leq |\xi|. $$
In other words, there exists some $w_*\in\pd\triangle$ such that
$$ \delta_\Omega(f(w_*)+\eta, g(w_*)) \leq |\xi|. $$
Then we can find a $\zeta\in\CC^n$ such that $|\zeta|\leq|\xi|$ and $(f(w_*)+\eta, g(w_*)+\zeta)\notin\Omega$. Consequently,
$$ d_\Omega(\pd\bd) \leq d_\Omega(f(w_*), g(w_*)) \leq \sqrt{|\eta|^2+|\zeta|^2} \leq r = d_\Omega(\bd). $$

\textbf{Case 2:} $\xi=0$. In this case, $|\eta|=r$ and
$$ (\hat{\tau},\hat{z}) := (f(w_0)+\eta, g(w_0)) \notin\Omega. $$
Recall that, $f:\overline{\triangle}\to\CC$ is non-constant and holomorphic. By the open mapping theorem, for any $0<\eps\ll1$, there exists a point $w_\eps\in\triangle$ closed to $w_0$ such that $f(w_\eps)=f(w_0)+\eps\eta$. Clearly, $w_\eps\to w_0$ as $\eps\to0$. We consider the closed analytic disc
$$ \bd_\eps: w\in\overline{\triangle} \mapsto (f(w)+(1-\eps)\eta,g(w))\in\CC\times\CC^n, $$
which is the horizontal shift of $\bd$ by $(1-\eps)\eta$. Since $|(1-\eps)\eta|<r=d_\Omega(\bd)$, we know $\bd_\eps\subset\Omega$. By similar arguments as above, we have $\delta_\Omega(\bd_\eps) = \delta_\Omega(\pd\bd_\eps)$. Since
$$ f(w_\eps)+(1-\eps)\eta = f(w_0)+\eta = \hat{\tau} $$
and $(\hat{\tau},\hat{z})\notin\Omega$, it is clear that
$$ \delta_\Omega(\pd\bd_\eps) = \delta_\Omega(\bd_\eps) \leq |g(w_\eps)-\hat{z}| = |g(w_\eps)-g(w_0)|. $$
As a consequence,
$$ d_\Omega(\pd\bd) \leq \sqrt{|(1-\eps)\eta|^2+|g(w_\eps)-g(w_0)|^2}. $$
By letting $\eps\to0$, we conclude that $d_\Omega(\pd\bd) \leq |\eta|=r=d_\Omega(\bd)$.

In summary, $d_\Omega(\bd)=d_\Omega(\pd\bd)$ in both case. This completes the proof.
\end{proof}


\begin{thebibliography}{99}
\bibitem{Berndtsson98} Bo Berndtsson: Prekopa's theorem and Kiselman's minimum principle for plurisubharmonic functions. Math. Ann. \textbf{312}(4): 785--792 (1998)

\bibitem{Berndtsson06} Bo Berndtsson: Subharmonicity properties of the Bergman kernel and some other functions associated to pseudoconvex domains. Ann. Inst. Fourier (Grenoble) \textbf{56}(6), 1633--1662 (2006)

\bibitem{Berndtsson09} Bo Berndtsson: Curvature of vector bundles associated to holomorphic fibrations. Ann. of Math. \textbf{169}, 531--560 (2009)

\bibitem{BerndtssonAbel} Bo Berndtsson: The Openness Conjecture and Complex Brunn-Minkowski Inequalities. In: Complex Geometry and Dynamics, The Abel Symposium 2013, pp. 29--44. Springer (2015)

\bibitem{BerndtssonECM} Bo Berndtsson: Complex Brunn-Minkowski inequalities and their applications in geometry. In: Proceedings of the 7th European Congress of Mathematics (Berlin 2016), pp. 443--457. European Mathematical Society (2018)

\bibitem{BerndtssonICM} Bo Berndtsson: Complex Brunn-Minkowski theory and positivity of vector bundles. In: Proceedings of the International Congress of Mathematicians (Rio de Janeiro 2018), pp. 859--884. World Scientific Publishing (2019)

\bibitem{DNW} Fusheng Deng, Jiafu Ning, Zhiwei Wang: Characterizations of plurisubharmonic functions. Sci. China Math. \textbf{64}(9), 1959--1970 (2021)
	
\bibitem{DNWZ} Fusheng Deng, Jiafu Ning, Zhiwei Wang, Xiangyu Zhou: Positivity of holomorphic vector bundles in terms of $L^p$-estimates for $\bar{\partial}$. Math. Ann. \textbf{385}(1-2), 575--607 (2023)
	
\bibitem{DWZZ} Fusheng Deng, Zhiwei Wang, Liyou Zhang, Xiangyu Zhou: New characterizations of plurisubharmonic functions, and positivity of direct image sheaves. Amer. J. Math. \textbf{146}(3), 751--768 (2024)

\bibitem{DZ} Fusheng Deng, Xujun Zhang: Characterizations of curvature positivity of Riemannian vector bundles and convexity or pseudoconvexity of bounded domains in $\mathbb{R}^n$ or $\mathbb{C}^n$ in terms of $L^2$-estimate of $d$ or $\bar\partial$ equation. J. Funct. Anal. \textbf{281}(9), no.109184 (2021)

\bibitem{Gardner} Richard J. Gardner: The Brunn-Minkowski inequality. Bull. Amer. Math. Soc. (N.S.) \textbf{39}(3), 355--405 (2002)

\bibitem{GZ15} Qi'an Guan, Xiangyu Zhou: A solution of an $L^2$ extension problem with an optimal estimate and applications. Ann. Math. \textbf{181}(3), 1139--1208 (2015)

\bibitem{HorBook} Lars H\"ormander: An introduction to complex analysis in several variables, third edition. North-Holland Publishing Co. (1990)

\bibitem{HI21} Genki Hosono, Takahiro Inayama: A converse of H{\"o}rmander's $L^2$-estimate and new positivity notions for vector bundles. Sci. China Math. \textbf{64}(8), 1745--1756 (2021)

\bibitem{Kiselman} Christer Kiselman: The partial Legendre transformation for plurisubharmonic functions. Invent. Math. \textbf{49}, 137--148 (1978)

\bibitem{KrantzBook} Steven G. Krantz: Function theory of several complex variables, second edition. AMS Chelsea Publishing (2001)

\bibitem{LiZhou} Zhi Li, Xiangyu Zhou: On the positivity of direct image bundles. Izv. Math. \textbf{87}(5), 987--1010 (2023)

\bibitem{MY04} Fumio Maitani, Hiroshi Yamaguchi: Variation of Bergman metrics on Riemann surfaces. Math. Ann. \textbf{330}(3), 477--489 (2004)

\bibitem{Prekopa73} Andr\'{a}s Pr\'{e}kopa: On logarithmic concave measures and functions. Acta Sci. Math. (Szeged) \textbf{34}, 335--343 (1973)
\end{thebibliography}
\end{document}